\numberwithin{equation}{section}
\newtheorem{theo}{Theorem}[section]
\newtheorem{lemma}[theo]{Lemma}
\newtheorem{assumption}[theo]{Assumption}
\newtheorem{defn}[theo]{Definition}
\newtheorem{remark}[theo]{Remark}
\newenvironment{proof}[1][Proof]{\textbf{#1.} }{\ \rule{0.5em}{0.5em}}
\newcommand{\bi}{\mathbf{i}}
\author{Mariana Olvera-Cravioto\\ \\ University of North Carolina at Chapel Hill}
\title{Strong couplings for static locally tree-like random graphs}
\date{}
\begin{document}
\maketitle

\begin{abstract}
The goal of this paper is to provide a general purpose result for the coupling of exploration processes of random graphs, both undirected and directed, with their local weak limits when this limit is a marked Galton-Watson process. This class includes in particular the configuration model and the family of inhomogeneous random graphs with rank-1 kernel. Vertices in the graph are allowed to have attributes on a general separable metric space and can potentially influence the construction of the graph itself. The coupling holds for any fixed depth of a breadth-first exploration process.
\vspace{5mm}

\noindent {\em Keywords: } Random graphs, complex networks, Galton-Watson processes, configuration model, inhomogeneous random graph, local-weak limits. 

\end{abstract}

\section{Introduction}

There is a growing literature of problems in physics, mathematics, computer science and operations research that are set up as processes, random or not, on large sparse graphs. The range of problems being studied is wide, and includes problems related to the  classification, sorting, and ranking of large networks, as well as the analysis of Markov chains and interacting particle systems on graphs. Popular among the types of graphs used for these purposes, are the locally tree-like random graph models such as the configuration model and the inhomogeneous random graph family (which includes the classical Erd\H os-R\'enyi model). These random graph models are quite versatile in the types of graphs they can mimic, and have important mathematical properties that make their analysis tractable. 

In particular, the mathematical tractability of locally tree-like random graphs comes from the fact that their local neighborhoods resemble trees. This property makes it easy to transfer questions about the process of interest on a graph, to the often easier analysis of the process on the limiting tree. Mathematically, this transfer is enabled by the notion of local weak convergence \cite{aldous2007processes, aldous2004objective, benjamini2011recurrence, Gar_vdH_Lit_19}. However, as it is the case for many problems involving usual weak convergence of random variables, it is often desirable to construct the original set of random variables and their corresponding weak limits on the same probability space, in other words, to have a coupling. In addition, many problems studying processes on graphs require that we keep track of additional vertex attributes not usually included in the local weak limits, attributes that may not be discrete. The recent work in \cite{fra_lin_olv_21} gives several examples of Markov chains and systems of equations on directed graphs whose analysis relies on the kind of couplings presented here, and include the study of the personalized PageRank distribution \cite{Chen_Lit_Olv_17, Gar_vdH_Lit_19, Olvera_20} among others. Further applications include the analysis of interacting diffusions \cite{lacker2020local, lack_ram_wu_20}, where one may wish to allow the vertex attributes to influence the dynamics of the processes being studied.  The results in this paper were designed to solve these two problems simultaneously, by providing a general purpose coupling between the exploration of the neighborhood of a uniformly chosen vertex in a locally tree-like graph and its local weak limit, including general vertex attributes that may indirectly influence the construction of the graph. 

The main results focus only on the two families of random graph models that are known to converge, in the local weak sense, to a marked Galton-Watson process. It is worth mentioning that other locally tree-like graphs like the preferential attachment models do not fall into this category, since their local weak limits are continuous-time branching processes. In particular, we focus on random graphs constructed according to either a configuration model or any of the inhomogeneous random graph models with rank-1 kernels (see Sections~\ref{SS.CM} and \ref{SS.IR} for the precise definitions). Our results include both undirected and directed graphs, and are given under minimal moment conditions in order to include scale-free graphs. In particular, under our assumptions, it is possible for the offspring distribution in the limiting marked Galton-Watson process to have infinite mean, and in the directed case, for the limiting joint distribution of the in-degree and out degree of a vertex to have infinite covariance. 

Before describing the two families of random graph models for which our coupling theorems hold, we will introduce some definitions that will be used throughout the paper. We will use $G(V_n, E_n)$ to denote a graph, or multigraph, having vertices $V_n = \{1, 2, \dots, n\}$ and edges in the set $E_n$. A directed edge from vertex $i$ to vertex $j$ is denoted by $(i,j)$. For multigraphs, we also need to keep track of the multiplicity of each edge or self loop, so we use $l(i)$ to denote the number of self-loops of vertex $i$ and $e(i,j)$ to denote the number of edges from vertex $i$ to vertex $j$. If the graph is undirected, we simply ignore the direction.  In the undirected case, we use $D_i$ to denote the degree of vertex $i$, which corresponds to the number of adjacent neighbors of vertex $i$. In the directed case, we use $D_i^-$ to denote the in-degree of vertex $i$ and $D_i^+$ to denote its out-degree; the in-degree counts the number of inbound neighbors while the out-degree the number of outbound ones. All our results are given in terms of the large graph limit, which corresponds to taking a sequence of graphs $\{ G(V_n, E_n): n \geq 1 \}$ and taking the limit as $|V_n| = n \to \infty$, where $|A|$ denotes the cardinality of set $A$. Both the configuration model and the family of inhomogeneous random graphs are meant to model large {\em static} graphs, since there may be no relation between $G(V_n, E_n)$ and $G(V_{m}, E_m)$ for $n \geq m$. Strong couplings for {\em evolving} graphs such as the preferential attachment models are a topic for future work.

\subsection{Configuration model} \label{SS.CM}

The configuration model \cite{bollobas, Hofstad1} produces graphs from any prescribed (graphical) degree sequence. In the undirected version of this model, each vertex is assigned a number of stubs or half-edges equal to its target degree. Then, these half-edges are randomly paired to create edges in the graph. 

For an undirected configuration model (CM), we assume that each vertex $i \in V_n$ is assigned an {\em attribute} vector $\mathbf{a}_i = (D_i, \mathbf{b}_i)$, where $D_i \in \mathbb{N}$ is its degree, and $\mathbf{b}_i$ encodes additional information about vertex $i$ that does not directly affect the construction of the graph but may depend on $D_i$. The attributes $\{\mathbf{b}_i\}$ are assumed to take values on a separable metric space $\mathcal{S}'$. For the sequence $\{D_i: 1 \leq i \leq n\}$ to define the degree sequence of an undirected graph, we must have that
$$L_n := \sum_{i=1}^n D_i$$
be even. Note that this may require us to consider a double sequence $\{ {\bf a}_i^{(n)}: i \geq 1, n \geq 1\}$ rather than a unique sequence, i.e., one where $\mathbf{a}_i^{(n)} \neq \mathbf{a}_i^{(m)}$ for $n \neq m$. In applications it is often convenient to allow the vertex attributes to be random themselves; the work in \cite{fra_lin_olv_21} studies various stochastic recursions on graphs that include vertex attributes such as the ones we envision here.

Assuming that $L_n$ is even, enumerate all the stubs, and pick one stub to pair; suppose the stub belongs to vertex $i$. Next, choose one of the remaining $L_n-1$ stubs uniformly at random, and if the stub belongs to vertex $j$, draw an edge between vertices $i$ and $j$; pick another stub to pair. In general, a stub being paired chooses uniformly at random from the set of unpaired stubs, then identifies the vertex to which the chosen stub belongs, and creates an edge between its vertex and the one to which the chosen stub belongs. 

The directed version of the configuration model (DCM) is such that each vertex $i \in V_n$ is assigned an {\em attribute} of the form $\mathbf{a}_i = (D_i^-, D_i^+, \mathbf{b}_i) \in \mathbb{N}^2 \times \mathcal{S}'$. Similarly to the undirected case, $D_i^-$ and $D_i^+$ denote the in-degree and the out-degree, respectively, of vertex $i$, and the $\mathbf{b}_i$ is allowed to depend on $(D_i^-, D_i^+)$. The condition needed to ensure we can draw a graph is now:
$$L_n := \sum_{i=1}^n D_i^+ = \sum_{i=1}^n D_i^-,$$
which again may require us to consider a double sequence $\{ {\bf a}_i^{(n)}: i \geq 1, n \geq 1\}$. 

As for the CM, we give to each vertex $i$ a number $D_i^-$ of inbound stubs, and a number $D_i^+$ of outbound stubs. To construct the graph, we start by choosing an inbound (outbound) stub, say belonging to vertex $i$, and choose uniformly at random one of the $L_n$ outbound (inbound) stubs. If the chosen stub belongs to vertex $j$, draw an edge from $j$ to $i$ (from $i$ to $j$); then pick another inbound (outbound) stub to pair. In general, when pairing an inbound (outbound) stub, we pick uniformly at random from all the remaining unpaired outbound (inbound) stubs. If the stub being paired belongs to vertex $i$, and the one to which the chosen stub belongs to is $j$, we draw a directed edge from $j$ to $i$ (from $i$ to $j$).

We emphasize that both the CM and the DCM are in general multi-graphs, that is, they can have self-loops and multiple edges (in the same direction) between a given pair of vertices. However, provided the pairing process does not create self-loops or multiple edges, the resulting graph is uniformly chosen among all graphs having the prescribed degree sequence.  It is well known that when the empirical degree distribution converges weakly and its second moment converges to that of the limit, the pairing process results in a simple graph with a probability that remains bounded away from zero even as the graph grows \cite{Hofstad1, Chen_Olv_13}. 

We will use $\mathscr{F}_n = \sigma( {\bf a}_i: 1 \leq i \leq n)$ to denote the sigma algebra generated by the attribute sequence, which does not include the edge structure of the graph. To simplify the notation, we will use $\mathbb{P}_n( \cdot ) = P( \cdot | \mathscr{F}_n)$ and $\mathbb{E}_n[ \cdot ] = E[ \cdot | \mathscr{F}_n]$ to denote the conditional probability and conditional expectation, respectively, given $\mathscr{F}_n$.

\subsection{Inhomogeneous random graphs} \label{SS.IR}

The second class of random graph models we consider is the family of inhomogeneous random graphs (digraphs), in which the presence of an edge is determined by the toss of a coin, independently of any other edge. This family includes the classical Erd\H os-R\'enyi graph \cite{Erdos}, but also several generalizations that allow the edge probabilities to depend on the two vertices being connected, e.g.,  the Chung-Lu model \cite{Chunglu}, the Norros-Reittu model (or Poissonian random graph) \cite{Norros}, and the generalized random graph \cite{Brittonetal} to name a few. Unlike the Erd\H os-R\'enyi model, these generalizations are capable of producing graphs with inhomogeneous degree sequences, and can mimic almost any degree distribution whose support is $\mathbb{N}$ (or $\mathbb{N}^2$ in the directed case).  This paper focuses only on inhomogeneous random graphs (digraphs) having rank-1 kernels (see  \cite{Boll_Jan_Rio_07}), which excludes models such as the stochastic block model.

Collectively, this family of models has a long history in the random graph literature, and their connectivity properties, phase transitions, and degree distributions are well known. Rather than attempting to name all the existing references where these models have appeared, we refer the interested reader to the books \cite{Bollobas2, Durrett1, Hofstad1, Hofstad2}, where many of their properties have been compiled.

To define an undirected inhomogeneous random graph (IR), assign to each vertex $i \in V_n$ an {\em attribute} ${\bf a}_i = (W_i, {\bf b}_i)  \in \mathbb{R}_+ \times \mathcal{S}'$. The $W_i$ will be used to determine how likely vertex $i$ is to have neighbors, while the ${\bf b}_i$ can be used to include vertex characteristics that are not needed for the construction of the graph but that are allowed to depend on $W_i$. If convenient, one can consider using a double sequence $\{ \mathbf{a}_i^{(n)}: 1 \geq 1, n \geq 1\}$ as with the configuration model, but this is not as important since the sequence $\mathscr{W}_n := \{W_i: 1 \leq i \leq n\}$ does not need to satisfy any additional conditions in order for us to draw the graph. As with the CM (DCM), the vertex attributes are allowed to be random.

We will use the same notation  $\mathscr{F}_n = \sigma( {\bf a}_i: 1 \leq i \leq n)$, as for the configuration model, to denote the sigma algebra generated by the vertex attributes, as well as the notation for the corresponding conditional probability, $\mathbb{P}_n(\cdot) = P( \cdot | \mathscr{F}_n)$, and expectation, $\mathbb{E}_n[ \cdot] = E[ \cdot | \mathscr{F}_n]$.

For the IR, the edge probabilities are given by:
\begin{equation*}
p_{ij}^{(n)} := \mathbb{P}_n \left( (i,j) \in E_n \right) = 1 \wedge  \frac{W_i W_j}{\theta n} (1 + \varphi_n( W_i, W_j)) , \qquad 1 \leq i < j \leq n,
\end{equation*}
where $-1 < \varphi_n(W_i, W_j) = \varphi(n, W_i, W_j,\mathscr{W}_n)$ a.s.~is a function that may depend on the entire sequence $\mathscr{W}_n$, on the types of the vertices $\{i,j\}$, or exclusively on $n$, and $0<\theta < \infty$ satisfies
$$\frac{1}{n} \sum_{i=1}^n W_i \stackrel{P}{\longrightarrow} \theta, \qquad n \to \infty.$$
 Here and in the sequel, $x \wedge y = \min\{x,y\}$ and $x \vee y = \max\{x, y\}$. Since the graph is to be simple by construction, $p_{ii}^{(n)} \equiv 0$ for all $i \in V_n$. 
 
For the directed version, which we refer to as an inhomogeneous random digraph (IRD), the vertex attributes take the form $\mathbf{a}_i = (W_i^-, W_i^+, \mathbf{b}_i) \in \mathbb{R}_+^2 \times \mathcal{S}'$. The parameter $W_i^-$ controls the in-degree of vertex $i$, and $W_i^+$ its out-degree. If we write $\mathbf{W}_i = (W_i^-, W_i^+)$, the edge probabilities in the IRD are given by:
\begin{equation*}
p_{ij}^{(n)} := \mathbb{P}_n \left( (i,j) \in E_n \right) = 1 \wedge  \frac{W^+_i W^-_j}{\theta n} (1 + \varphi_n( \mathbf{W}_i, \mathbf{W}_j)) , \qquad 1 \leq i \neq j \leq n,
\end{equation*}
where $-1 < \varphi_n(\mathbf{W}_i, \mathbf{W}_j) = \varphi(n, \mathbf{W}_i, \mathbf{W}_j,\mathscr{W}_n)$ a.s.~is a function that may depend on the entire sequence $\mathscr{W}_n:= \{ \mathbf{W}_i: 1 \leq i \leq n\}$, on the types of the vertices $\{i,j\}$, or exclusively on $n$, and $0<\theta < \infty$ satisfies
$$\frac{1}{n} \sum_{i=1}^n (W_i^- + W_i^+) \stackrel{P}{\longrightarrow} \theta, \qquad n \to \infty.$$
Since the graphs are again simple by construction, we have $p_{ii}^{(n)} \equiv 0$ for all $i \in V_n$.

\section{Main result for undirected graphs}

For an undirected graph constructed according to one of the two models (CM or IR), our main result shows that there exists a coupling between the breadth-first exploration of the component of a uniformly chosen vertex and that of the root node of a marked Galton-Watson process. Before we can state the theorem, we need to introduce some notation on the graph and describe the Galton-Watson process that describes its local weak limit. 

Each vertex $i$ in an undirected graph (multigraph) $G(V_n, E_n)$ is given a vertex {\em attribute} of the form:
$${\bf a}_i = \begin{cases} (D_i, {\bf b}_i) & \text{if $G(V_n, E_n)$ is a CM,} \\
(W_i, {\bf b}_i) & \text{if $G(V_n, E_n)$ is an IR.} \end{cases}$$
In addition, define for each vertex $i$ its {\em full mark}:
$${\bf X}_i = (D_i, {\bf a}_i),$$
where $D_i$ is the degree of vertex $i$. We point out that the definition of $\mathbf{X}_i$ is redundant when the graph is a CM, however, it is not so if the graph is an IR. In both cases the vertex attributes are measurable with respect to $\mathscr{F}_n$, while the full marks are not if the graph is an IR.  

The main assumption needed for the coupling to hold is given in terms of the empirical measure for the vertex attributes, i.e.,  
\begin{equation} \label{eq:AttributeDistr}
\nu_n(\cdot ) = \frac{1}{n} \sum_{i=1}^n 1( {\bf a}_i \in \cdot ) .
\end{equation}
In order to state the assumption, recall that the state space for the vertex attributes, $\mathcal{S}'$, is assumed to be a separable metric space under metric $\rho'$. Now define the metric
$$\rho({\bf x}, {\bf y}) = |x_1 - y_1| + |x_2 - y_2| + \rho'({\bf x}_3, {\bf y}_3), \qquad {\bf x} = (x_1, x_2, {\bf x}_3), \, {\bf y} = (y_1, y_2, {\bf y}_3),$$
on the space $\mathcal{S} := \mathbb{N} \times \mathbb{R} \times \mathcal{S}'$, which makes $\mathcal{S}$ a separable metric space as well. Using $\rho$, and for any probability measures $\nu_n, \mu_n$ on the conditional probability space $(\mathcal{S}, \mathscr{F}_n, \mathbb{P}_n)$, define the Wasserstein metric of order one
$$W_1(\nu_n, \mu_n) = \inf\left\{  \mathbb{E}_n\left[ \rho(\mathbf{\hat Y}, \boldsymbol{Y}) \right]: \text{law}(\mathbf{\hat Y} | \mathscr{F}_n) = \nu_n, \, \text{law}(\boldsymbol{Y} | \mathscr{F}_n) = \mu_n \right\}.$$

\begin{assumption} \label{A.PrimitivesU}
\textbf{(Undirected)} Let $\nu_n$ be defined according to \eqref{eq:AttributeDistr}, and suppose there exists a probability measure $\nu$ such that
$$ W_1(\nu_n, \nu) \stackrel{P}{\longrightarrow} 0, \qquad n \to \infty.$$
In addition, assume that the following conditions hold:
\begin{itemize}
\item[A.] In the CM, let $(\mathscr{D}, \boldsymbol{B} )$ be distributed according to $\nu$, and suppose there exists a non-random ${\bf b}_0 \in \mathcal{S}'$ such that $E[\mathscr{D} + \rho'(\boldsymbol{B}, {\bf b}_0) ] < \infty$.

\item[B.] In the IR, let $(W, \boldsymbol{B})$ be distributed according to $\nu$, and suppose the following hold:
\begin{enumerate}
\item[1.] $\displaystyle \mathcal{E}_n = \frac{1}{n} \sum_{i=1}^n \sum_{1 \leq i \neq j\leq n,} |p_{ij}^{(n)} - (r_{ij}^{(n)} \wedge 1) | \xrightarrow{P} 0$ as $n \to \infty$, where $r_{ij}^{(n)} = W_i W_j/(\theta n)$.

\item[2.] There exists a non-random ${\bf b}_0 \in \mathcal{S}'$ such that $E[W + \rho'(\boldsymbol{B}, {\bf b}_0) ] < \infty$.

\end{enumerate}

\end{itemize}
\end{assumption}

Now that we have stated the assumptions for our theorem, we need to describe the local neighborhood of a vertex in the graph $G(V_n, E_n)$. To do this, let $I \in V_n$ denote  a uniformly chosen vertex in $G(V_n, E_n)$; vertices are identified with their  labels in $\{1, 2, \dots, n\}$. Define $A_0 = \{ I \}$, and let $A_k$ denote the set of vertices at hop distance $k$ from $I$. Now write $\mathcal{G}_I^{(k)}$ to be the subgraph of $G(V_n, E_n)$ consisting of the vertices in $\bigcup_{r=0}^k A_r$ along with their (multiple) edges and self-loops. We will also use the notation $\mathcal{G}_I^{(k)}(\mathbf{a})$ to refer to the graph $\mathcal{G}_I^{(k)}$ including all the attributes of its vertices. 

\begin{defn}
We say that two simple graphs $G(V, E)$ and $G'(V', E')$ are {\em isomorphic} if there exists a bijection $\sigma: V \to V'$ such that edge $(i,j) \in E$ if and only if edge $(\sigma(i), \sigma(j)) \in E'$.  We say that two multigraphs $G(V,E)$ and $G'(V', E')$ are {\em isomorphic} if there exists a bijection $\sigma: V \to V'$ such that $l(i) = l(\sigma(i))$ and $e(i,j) = e(\sigma(i), \sigma(j))$ for all $i \in V$ and all $(i,j) \in E$, where $l(i)$ is the number of self-loops of vertex $i$ and $e(i,j)$ is the number of edges from vertex $i$ to vertex $j$. In both cases, we write $G \simeq G'$.
\end{defn}

To describe the limit of $\mathcal{G}_I^{(k)}$ as $n \to \infty$, we will construct a delayed marked Galton-Watson process, denoted $\mathcal{T}(\boldsymbol{A})$, using the measure $\nu$ in Assumption~\ref{A.PrimitivesU}. The ``delayed" refers to the fact that the root will, in general, have a different distribution than all other nodes in the tree. 

To start, let $\mathcal{U} := \bigcup_{k=0}^\infty \mathbb{N}_+^k$ denote the set of labels for nodes in a tree, with the convention that $\mathbb{N}_+^0 := \{ \emptyset \}$ contains the root. For a label $\mathbf{i} = (i_1, \dots, i_k)$ we write $|\mathbf{i}| = k$ to denote its length, and use $(\mathbf{i}, j) = (i_1, \dots, i_k, j)$ to denote the index concatenation operation. 

The tree $\mathcal{T}$ is constructed as follows. Let $\{ (\mathcal{N}_\mathbf{i}, \boldsymbol{A}_\mathbf{i}): \mathbf{i} \in \mathcal{U} \}$ denote a sequence of independent vectors in $\mathcal{S}$, with $\{ (\mathcal{N}_\mathbf{i}, \boldsymbol{A}_\mathbf{i}): \mathbf{i} \in \mathcal{U}, \mathbf{i} \neq \emptyset \}$ i.i.d. For any $\mathbf{i} \in \mathcal{U}$, the $\mathcal{N}_\mathbf{i}$ will denote the number of offspring of node $\mathbf{i}$, and $\boldsymbol{A}_\mathbf{i}$ will denote its attribute (mark). 
As with the graph, we will use the notation $\mathcal{T}$ to denote the tree without its attributes. Let $\mathcal{A}_0 = \{ \emptyset \}$ and recursively define 
$$\mathcal{A}_k = \{(\mathbf{i}, j):  \mathbf{i} \in \mathcal{A}_{k-1} , \, 1 \leq j \leq \mathcal{N}_\mathbf{i} \}, \qquad k \geq 1,$$
to be the $k$th generation of $\mathcal{T}$. To match the notation on the graph, we write
$$\boldsymbol{X}_\emptyset = (\mathcal{N}_\emptyset, \boldsymbol{A}_\emptyset), \quad \text{and} \quad \boldsymbol{X}_\bi = (\mathcal{N}_\bi+1, \boldsymbol{A}_\bi), \quad \bi \neq \emptyset.$$
The marked tree is then given by $\mathcal{T}(\boldsymbol{A}) = \{ \boldsymbol{X}_\bi: \bi \in \mathcal{T} \}$; note that the marks include the number of offspring of each node, from where the edges in the tree can be deduced. We will denote $\mathcal{T}^{(k)}$ ($ \mathcal{T}^{(k)}(\boldsymbol{A})$) to be the restriction of $\mathcal{T}$ ($\mathcal{T}(\boldsymbol{A})$) to its first $k$ generations.

It only remains to identify the distribution of $\boldsymbol{X}_\bi$, for both $\bi = \emptyset$ and $\bi \neq \emptyset$, in terms of the probability measure $\nu$ in Assumption~\ref{A.PrimitivesU}. For a CM, let $\boldsymbol{A} = (\mathscr{D}, \boldsymbol{B})$ be distributed according to $\nu$, then, 
\begin{align*}
P(\boldsymbol{X}_\emptyset \in \cdot ) &= P( (\mathscr{D}, \boldsymbol{A}) \in \cdot), \\
P(\boldsymbol{X}_\bi \in \cdot) &= \frac{1}{E[\mathscr{D}]} E\left[ \mathscr{D}1( (\mathscr{D}, \boldsymbol{A}) \in \cdot) \right], \quad \bi \neq \emptyset. 
\end{align*}
For an IR, let $\boldsymbol{A} = (W, \boldsymbol{B})$ be distributed according to $\nu$, then,
\begin{align*}
P(\boldsymbol{X}_\emptyset \in \cdot ) &= P( (D, \boldsymbol{A}) \in \cdot), \\
P(\boldsymbol{X}_\bi \in \cdot ) &= \frac{1}{E[W]} E\left[ W 1( (D+1, \boldsymbol{A}) \in \cdot) \right], \quad \bi \neq \emptyset,
\end{align*}
where $D$ is a mixed Poisson random variable with mean $W$. Note that the distribution of $\boldsymbol{X}_\bi$ for $\bi \neq \emptyset$, corresponds to a {\em size-biased} version of the distribution of $\boldsymbol{X}_\emptyset$ with respect to its first coordinate.

We are now ready to state the main coupling theorem for undirected graphs. 

\begin{theo} \label{T.MainU}
Suppose $G(V_n, E_n)$ is either a CM or an IR satisfying Assumption~\ref{A.PrimitivesU}. Then, for any fixed $k$ and $\mathcal{G}^{(k)}_I({\bf a})$ the depth-$k$ neighborhood of a uniformly chosen vertex $I \in V_n$, there exists a marked Galton-Watson tree $\mathcal{T}^{(k)} (\boldsymbol{A})$ restricted to its first $k$ generations, whose root corresponds to vertex $I$, and 
such that,
$$\mathbb{P}_n\left( \mathcal{G}_I^{(k)} \not\simeq \mathcal{T}^{(k)} \right) \xrightarrow{P} 0, \qquad n \to \infty,$$
and if we let $\sigma({\bf i}) \in V_n$ denote the vertex in the graph corresponding to node ${\bf i} \in \mathcal{T}^{(k)}$, and define for any $\epsilon > 0$ the event
$$C_{I}^{(k,\epsilon)} = \left\{   \bigcap_{{\bf i} \in \mathcal{T}^{(k)}}   \{ \rho(\mathbf{X}_{\sigma({\bf i})}, \boldsymbol{X}_{\bf i} ) \leq \epsilon \}   , \,  \mathcal{G}_I^{(k)} \simeq \mathcal{T}^{(k)} \right\},$$
then
$$\mathbb{E}_n\left[ \rho(\mathbf{X}_I, \boldsymbol{X}_\emptyset) \right] \xrightarrow{P} 0 \qquad \text{and} \qquad  \mathbb{P}_n \left( C_I^{(k,\epsilon)}  \right) \xrightarrow{P} 1, \qquad n \to \infty.$$
Moreover, for any fixed $m, k \geq 1$, $\epsilon > 0$, and $\{I_j: 1 \leq j \leq m\}$ i.i.d.~random variables uniformly chosen in $V_n$, there exist i.i.d.~copies of $\mathcal{T}^{(k)}(\boldsymbol{A})$, denoted $\{ \mathcal{T}_{\emptyset(I_j)}^{(k)}(\boldsymbol{A}): 1 \leq j \leq m\}$, whose roots correspond to the vertices $\{I_j: 1 \leq j \leq m\}$ in $G(V_n, E_n)$, such that
$$\sum_{j=1}^m \mathbb{E}_n\left[ \rho(\mathbf{X}_{I_j}, \boldsymbol{X}_{\emptyset(I_j)}) \right] \xrightarrow{P} 0 \qquad \text{and} \qquad \mathbb{P}_n\left( \bigcap_{j=1}^m C_{I_j}^{(k,\epsilon)} \right) \xrightarrow{P} 1, \qquad n \to \infty.$$
\end{theo}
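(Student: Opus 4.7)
The plan is to build the coupling by a joint breadth-first construction: reveal the neighborhood $\mathcal{G}_I^{(k)}$ generation by generation while simultaneously generating the marks of $\mathcal{T}^{(k)}(\boldsymbol{A})$, inducting on $k \geq 0$. For the root ($k=0$), the uniform choice of $I$ makes the conditional law of $\mathbf{a}_I$ equal to $\nu_n$, so an optimal Wasserstein coupling produces $\boldsymbol{A}_\emptyset \sim \nu$ with $\mathbb{E}_n[\rho'(\mathbf{a}_I, \boldsymbol{A}_\emptyset)] \leq W_1(\nu_n, \nu) \xrightarrow{P} 0$. In the CM case this is already the full-mark coupling since the degree is part of $\mathbf{a}_I$; in the IR case one additionally couples $D_I$, a sum of independent $\text{Bern}(p_{Ij}^{(n)})$, to $\mathcal{N}_\emptyset \sim \text{Poisson}(W_I)$ by first using Assumption~B.1 to replace $p_{ij}^{(n)}$ by $r_{ij}^{(n)} \wedge 1$ at vanishing total-variation cost and then applying a standard Poisson approximation bound. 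This yields the $L^1$ statement $\mathbb{E}_n[\rho(\mathbf{X}_I, \boldsymbol{X}_\emptyset)] \xrightarrow{P} 0$.

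The inductive step reveals the neighbors of each depth-$(k-1)$ vertex and couples them to the corresponding offspring in the tree. In the CM, remaining stubs are paired sequentially: a new pairing chooses a uniform stub from the unpaired pool, so (i) it lies on an already-revealed vertex with probability at most (revealed stubs)$/(L_n-1)$, and (ii) otherwise identifies a fresh vertex whose full mark is drawn from the empirical size-biased law $\nu_n^\ast := L_n^{-1} \sum_i D_i\, \delta_{(D_i, \mathbf{a}_i)}$, which weak-converges to the size-biased $\nu^\ast$ defining the offspring marks in $\mathcal{T}$. Since $|\mathcal{T}^{(k)}|$ is a.s.\ finite and $L_n \sim n E[\mathscr{D}]$, the total collision probability is $o_P(1)$, and a Skorokhod-style measurable selection provides a coupling inside the $\epsilon$-neighborhood with probability tending to $1$. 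In the IR, the neighbors of each explored vertex $v$ are generated by independent $\text{Bern}(p_{vj}^{(n)})$ trials over unrevealed $j$; Assumption~B.1 swaps these for $\text{Bern}(r_{vj}^{(n)} \wedge 1)$ at vanishing cost, and a standard Poisson coupling then matches both the neighbor count (to $\mathcal{N}_v \sim \text{Poisson}(W_v)$) and the attributes (to i.i.d.\ size-biased samples from $\nu_n^\ast$).

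For the $m$-root statement I would sample $I_1, \ldots, I_m$ i.i.d.\ uniformly and run the single-root construction in sequence; the probability that a new exploration meets the previously explored set is $O(m|\mathcal{T}^{(k)}|/n) = o_P(1)$, so on the (high-probability) disjoint event the $m$ couplings are independent copies of the single-root case, and both conclusions follow by linearity and a union bound. The main obstacle is that the size-biased offspring law may have infinite mean (when $E[\mathscr{D}^2] = \infty$ or $E[W^2] = \infty$), so $W_1(\nu_n^\ast, \nu^\ast) \to 0$ is unavailable and one cannot iterate the $W_1$ bound through generations. I would sidestep this by exploiting the asymmetric form of the two conclusions: $W_1(\nu_n, \nu) \to 0$ is used only at the root, where the sample is unbiased, whereas for non-root generations only weak convergence of the size-biased empirical measure is required to deliver the $\epsilon$-closeness event $C_I^{(k,\epsilon)}$, and this follows from $\nu_n \Rightarrow \nu$ together with $L_n/n \to E[\mathscr{D}]$ without further moment assumptions. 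Carefully bookkeeping the collision errors, Poisson-coupling errors, and Wasserstein errors generation by generation is the bulk of the technical work.
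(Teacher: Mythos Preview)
Your outline is correct and tracks the paper's strategy closely; the main organizational difference is that the paper factors the argument through an explicit \emph{intermediate} marked Galton--Watson tree $\hat T^{(k)}(\mathbf{\hat A})$ whose node marks are sampled from the \emph{empirical} (resp.\ empirical size-biased) law on $\{\mathbf{a}_1,\dots,\mathbf{a}_n\}$. This cleanly separates the two error sources you identify: Theorem~\ref{T.Coupling} gives a \emph{perfect-mark} isomorphism $\mathcal{G}_I^{(k)}(\mathbf{a})\simeq \hat T^{(k)}(\mathbf{\hat A})$ (handling collisions/self-loops/Bernoulli-vs-Poisson), and a separate tree-to-tree lemma couples $\hat T^{(k)}$ to $\mathcal{T}^{(k)}$ (handling $\nu_n\to\nu$ at the root in $W_1$ and the size-biased weak convergence elsewhere, exactly as you diagnose). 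One benefit of the decomposition is that the collision bound in the CM needs control of the number of \emph{stubs} revealed, i.e.\ of $|\hat V_{k+1}|$ rather than $|V_k|$; the paper gets this by first invoking the tree-to-tree coupling to replace $|\hat V_{k+1}|$ by $|\mathcal{V}_{k+1}|<\infty$ a.s., which resolves the apparent circularity in your ``$|\mathcal{T}^{(k)}|$ is a.s.\ finite'' step.

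One technical point you should not skip in the IR case: the paper truncates the weights to $\bar W_i = W_i\wedge b_n$ with $b_n/\sqrt{n}\to 0$ and couples each Bernoulli edge indicator to a Poisson$(\bar W_i\bar W_j/(\theta n))$ variable via a shared uniform. Without truncation the Le Cam--type error $\sum_j (r_{ij}^{(n)})^2$ is not controlled under the sole assumption $E[W]<\infty$, so ``standard Poisson approximation'' alone does not close the argument for heavy-tailed $W$. For the $m$-root part, your sequential-exploration idea is what the paper does, but note that even on the disjoint-exploration event the intermediate trees $\hat T_{\emptyset(I_j)}^{(k)}$ share the random environment $\mathscr{F}_n$; the paper handles this by first manufacturing genuinely conditionally i.i.d.\ copies $\tilde T_{\emptyset(I_j)}^{(k)}$ (resampling whenever a previously seen vertex label recurs) and only then invoking the tree-to-tree coupling $m$ times independently.
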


\begin{remark}
Theorem~\ref{T.MainU} implies that the graph $G(V_n, E_n)$ converges in the local weak sense in probability, as introduced in \cite{aldous2007processes, aldous2004objective, benjamini2011recurrence} for undirected graphs and later extended in \cite{Gar_vdH_Lit_19} to marked undirected graphs. The statement involving more than one exploration is related to the notion of propagation of chaos in the interacting particles literature (see, for example, Definition~4.1 in \cite{Chain_Diez_21}), and can be of independent interest in that context. 
\end{remark}

\section{Main result for directed graphs}

In the directed case, our main result will allow us to couple the breadth-first exploration of either the in-component or the out-component of a uniformly chosen vertex. Since the two cases are clearly symmetric, we state our results only for the in-component. 

As with the undirected graph, each vertex $i$ in the graph $G(V_n, E_n)$ has an {\em attribute}:
$$\mathbf{a}_i = \begin{cases} (D_i^-, D_i^+, \mathbf{b}_i) & \text{if $G(V_n, E_n)$ is a DCM}, \\
(W_i^-, W_i^+, \mathbf{b}_i) & \text{if $G(V_n, E_n)$ is an IRD}. \end{cases}$$
The {\em full mark} of vertex $i$ is now given by:
$$\mathbf{X}_i = (D_i^-, D_i^+, \mathbf{a}_i),$$
where $D_i^-$ and $D_i^+$ are the in-degree and out-degree, respectively, of vertex $i$. 

With some abuse of notation, we use again $\nu_n$, as defined in \eqref{eq:AttributeDistr}, to denote the empirical measure for the vertex attributes. However, the state space for the full marks is now $\mathcal{S} := \mathbb{N}^2 \times \mathbb{R}^2 \times \mathcal{S}'$, equipped with the metric:
$$\rho(\mathbf{x}, \mathbf{y}) = |x_1 - y_1| + |x_2 - y_2| + |x_3 - y_3| + |x_4 - y_4| + \rho'(\mathbf{x}_5, \mathbf{y}_5),$$
for $\mathbf{x} = (x_1, x_2, x_3, x_4, \mathbf{x}_5)$ and $\mathbf{y} = (y_1, y_2, y_3, y_4, \mathbf{y}_5)$. The Wasserstein metric $W_1$ defined on the conditional probability space $(\mathcal{S}, \mathscr{F}_n, \mathbb{P}_n)$ remains the same after the adjustments made to $\mathcal{S}$ and $\rho$.

\begin{assumption} \label{A.PrimitivesD}
\textbf{(Directed)} Let $\nu_n$ be defined according to \eqref{eq:AttributeDistr}, and suppose there exists a probability measure $\nu$  such that
$$ W_1(\nu_n, \nu) \stackrel{P}{\longrightarrow} 0, \qquad n \to \infty.$$
In addition, assume that the following conditions hold:
\begin{itemize}
\item[A.] In the DCM, let $(\mathscr{D}^-, \mathscr{D}^+, \boldsymbol{B} )$ be distributed according to $\nu$, and suppose there exists a non-random ${\bf b}_0 \in \mathcal{S}'$ such that $E[\mathscr{D}^- + \mathscr{D}^+ + \rho(\boldsymbol{B}, {\bf b}_0) ] < \infty$.

\item[B.] In the IRD, let $(W^-, W^+, \boldsymbol{B})$ be distributed according to $\nu$, and suppose the following hold:
\begin{enumerate}
\item[1.] $\displaystyle \mathcal{E}_n = \frac{1}{n} \sum_{i=1}^n \sum_{1 \leq i \neq j\leq n,} |p_{ij}^{(n)} - (r_{ij}^{(n)} \wedge 1) | \xrightarrow{P} 0$ as $n \to \infty$, where $r_{ij}^{(n)} = W_i^+ W_j^-/(\theta n)$.

\item[2.] There exists a non-random ${\bf b}_0 \in \mathcal{S}'$ such that $E[W^- + W^+ + \rho(\boldsymbol{B}, {\bf b}_0) ] < \infty$.

\end{enumerate}

\end{itemize}
\end{assumption}

Since we will state our result for the exploration of the in-component of a uniformly chosen vertex, the structure of the coupled tree will be determined by the vertices that we encounter during a breadth-first exploration. This exploration starts with a uniformly chosen vertex $I \in V_n$, which is used to create the set $A_0 = \{ I\}$. It then follows all the inbound edges of $I$ to discover all the vertices at inbound distance one from $I$, which become the set $A_1$. In general, to identify the vertices in the set $A_k$, we explore all the inbound edges of vertices in $A_{k-1}$. As we perform the exploration, we also discover the out-degrees of the vertices we have encountered, however, we do not follow any outbound edges. We then define $\mathcal{G}_I^{(k)}$ to be the subgraph of $G(V_n, E_n)$ whose vertex set is $\bigcup_{r=0}^k A_r$ and whose edges are those that are encountered during the breadth-first exploration we described. The notation $\mathcal{G}_I^{(k)}(\mathbf{a})$ will be used to refer to the graph $\mathcal{G}_I^{(k)}$ including the values of the full marks $\{ \mathbf{X}_i\}$ for all of its vertices. 

In the directed case, the limit of $\mathcal{G}_I^{(k)}$ is again a delayed marked Galton-Watson process, with the convention that all its  edges are pointing towards the root. We will denote the tree $\mathcal{T}(\boldsymbol{A})$ as before, however, it will be constructed using a sequence of independent vectors of the form $\{ (\mathcal{N}_\bi, \mathcal{D}_\bi, \boldsymbol{A}_\bi): \bi \in \mathcal{U}\}$, with $\{ (\mathcal{N}_\bi, \mathcal{D}_\bi, \boldsymbol{A}_\bi): \bi \in \mathcal{U}, \bi \neq \emptyset\}$ i.i.d. In other words, the full marks now take the form
$$\boldsymbol{X}_\bi = (\mathcal{N}_\bi, \mathcal{D}_\bi, \boldsymbol{A}_\bi), \quad \bi \in \mathcal{U}. $$
The construction of the tree $\mathcal{T}$ is done as in the undirected case using the $\{\mathcal{N}_\bi: \bi \in \mathcal{U} \}$, and the marked tree is given by $\mathcal{T}(\boldsymbol{A}) = \{ \boldsymbol{X}_\bi : \bi \in \mathcal{T} \}$. The notation $\mathcal{T}^{(k)}$ ($\mathcal{T}^{(k)}(\boldsymbol{A})$) refers again to the restriction of $\mathcal{T}$ ($\mathcal{T}(\boldsymbol{A})$) to its first $k$ generations. 

The distribution of the full marks $\mathbf{X}_\bi$ for both $\bi = \emptyset$ and $\bi \neq \emptyset$ are also different than in the undirected case. For a DCM, let $\boldsymbol{A} = (\mathscr{D}^-, \mathscr{D}^+, \boldsymbol{B})$ be distributed according to $\nu$, then
\begin{align*}
P(\boldsymbol{X}_\emptyset \in \cdot) &= P( (\mathscr{D}^-, \mathscr{D}^+, \boldsymbol{A}) \in \cdot), \\
P(\boldsymbol{X}_\bi \in \cdot) &= \frac{1}{E[\mathscr{D}^+]} E\left[ \mathscr{D}^+ 1((\mathscr{D}^-, \mathscr{D}^+, \boldsymbol{A}) \in \cdot) \right], \quad \bi \neq \emptyset. 
\end{align*}
For an IRD, let $\boldsymbol{A} = (W^-, W^+, \boldsymbol{B})$ be distributed according to $\nu$, then
\begin{align*}
P(\boldsymbol{X}_\emptyset \in \cdot) &= P( (D^-, D^+, \boldsymbol{A}) \in \cdot), \\
P(\boldsymbol{X}_\bi \in \cdot) &= \frac{1}{E[W^+]} E\left[ W^+ 1((D^-, D^++1, \boldsymbol{A}) \in \cdot) \right], \quad \bi \neq \emptyset,
\end{align*}
where $D^-$ and $D^+$ are conditionally independent (given $(W^-, W^+)$) Poisson random variables with means $c W^-$ and $(1-c)W^+$, respectively, and $c = E[W^+]/E[W^- + W^+]$. Note that in this case, the distribution of $\boldsymbol{X}_\bi$ for $\bi \neq \emptyset$, corresponds to a {\em size-biased} version of the distribution of $\boldsymbol{X}_\emptyset$ with respect to its second coordinate. 

The following is our main coupling theorem for directed graphs.

\begin{theo} \label{T.MainD}
Suppose $G(V_n, E_n)$ is either a DCM or an IRD satisfying Assumption~\ref{A.PrimitivesD}. Then, for any fixed $k$ and $\mathcal{G}^{(k)}_I({\bf a})$ the depth-$k$ neighborhood of a uniformly chosen vertex $I \in V_n$, there exists a marked Galton-Watson tree $\mathcal{T}^{(k)} (\boldsymbol{A})$ restricted to its first $k$ generations, whose root corresponds to vertex $I$, and 
such that,
$$\mathbb{P}_n\left( \mathcal{G}_I^{(k)} \not\simeq \mathcal{T}^{(k)} \right) \xrightarrow{P} 0, \qquad n \to \infty,$$
and if we let $\sigma({\bf i}) \in V_n$ denote the vertex in the graph corresponding to node ${\bf i} \in \mathcal{T}^{(k)}$, and define for any $\epsilon > 0$ the event
$$C_{I}^{(k,\epsilon)} = \left\{  \bigcap_{{\bf i} \in \mathcal{T}^{(k)}}   \{ \rho(\mathbf{X}_{\sigma({\bf i})}, \boldsymbol{X}_{\bf i} ) \leq \epsilon \}   , \,  \mathcal{G}_I^{(k)} \simeq \mathcal{T}^{(k)} \right\},$$
then
$$\mathbb{E}_n\left[ \rho(\mathbf{X}_I, \boldsymbol{X}_\emptyset) \right] \xrightarrow{P} 0 \qquad \text{and} \qquad  \mathbb{P}_n \left(  C_I^{(k,\epsilon)}  \right) \xrightarrow{P} 1, \qquad n \to \infty.$$
Moreover, for any fixed $m, k \geq 1$, $\epsilon > 0$, and $\{I_j: 1 \leq j \leq m\}$ i.i.d.~random variables uniformly chosen in $V_n$, there exist i.i.d.~copies of $\mathcal{T}^{(k)}(\boldsymbol{A})$, denoted $\{ \mathcal{T}_{\emptyset(I_j)}^{(k)}(\boldsymbol{A}): 1 \leq j \leq m\}$, whose roots correspond to the vertices $\{I_j: 1 \leq j \leq m\}$ in $G(V_n, E_n)$, such that
$$\sum_{j=1}^m \mathbb{E}_n\left[ \rho(\mathbf{X}_{I_j}, \boldsymbol{X}_{\emptyset(I_j)}) \right] \xrightarrow{P} 0 \qquad \text{and} \qquad \mathbb{P}_n\left( \bigcap_{j=1}^m C_{I_j}^{(k,\epsilon)} \right) \xrightarrow{P} 1, \qquad n \to \infty.$$
\end{theo}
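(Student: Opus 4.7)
\textbf{Proof plan for Theorem \ref{T.MainD}.} The strategy is to construct the coupling jointly with the breadth-first in-component exploration, generation by generation, and to handle the DCM and IRD cases in parallel: in both, the offspring of a newly explored vertex $v$ have an attribute drawn (approximately) from a $W^+$- (resp.~$\mathscr{D}^+$-)size-biasing of $\nu_n$, and the approximation errors can be bounded using $W_1(\nu_n,\nu)\to 0$. I would first prove the single-root statement ($m=1$) in full, then deduce the multi-vertex version from it by a disjointness argument on the depth-$k$ neighborhoods of the $I_j$'s.

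\emph{Root coupling.} For a uniformly chosen $I\in V_n$, the conditional law of $\boldsymbol{a}_I$ given $\mathscr{F}_n$ is exactly $\nu_n$. Using an optimal $W_1$-coupling between $\nu_n$ and $\nu$, I would couple $\boldsymbol{a}_I$ with $\boldsymbol{A}_\emptyset\sim\nu$ so that $\mathbb{E}_n[\rho'(\boldsymbol{B}_I,\boldsymbol{B}_\emptyset)+|W_I^--W^-|+|W_I^+-W^+|]\to 0$ in probability; in the DCM case, $(D_I^-,D_I^+)$ is part of $\boldsymbol{a}_I$, so $(\mathcal{N}_\emptyset,\mathcal{D}_\emptyset)=(D_I^-,D_I^+)$ and the root step is complete. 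In the IRD case, conditionally on $\mathscr{F}_n$, $D_I^-$ and $D_I^+$ are sums of independent Bernoullis with success probabilities $p_{jI}^{(n)}$ and $p_{Ij}^{(n)}$; using Assumption \ref{A.PrimitivesD}.B.1, I would replace these by $r_{jI}^{(n)}\wedge 1$ and $r_{Ij}^{(n)}\wedge 1$ at total-variation cost $\mathcal{E}_n\to 0$, and then apply the standard Stein/Chen-type Poisson coupling of a sum of Bernoullis with a Poisson variable of the same mean (cost bounded by $\sum_j(r_{jI}^{(n)})^2$, which is $o(1)$ because $\max_j W_j^+/n\to 0$ under a first moment condition after truncation). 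The resulting Poisson means $\sum_j W_j^+ W_I^-/(\theta n)$ and $\sum_j W_I^+ W_j^-/(\theta n)$ concentrate to $cW_I^-$ and $(1-c)W_I^+$, matching the target laws $\text{Poisson}(cW^-)$ and $\text{Poisson}((1-c)W^+)$.

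\emph{Inductive step.} Assuming the coupling has been constructed up through generation $k-1$ with the explored vertex set $\mathcal{V}_{k-1}$ of bounded size in probability, I would extend it to generation $k$ by processing each vertex $v$ of generation $k-1$ in turn. In the DCM, the inbound stubs of $v$ are paired with outbound stubs chosen uniformly from those still unpaired; since only $O(|\mathcal{V}_{k-1}|)$ stubs have been consumed, the law of the vertex attached to any pairing step is within total variation $O(|\mathcal{V}_{k-1}|/L_n)$ of the $D^+$-size-biased empirical distribution, which in turn is $W_1$-close to the size-biased $\nu$ by Assumption \ref{A.PrimitivesD}.A applied to the first coordinate. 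I would then couple successive picks with i.i.d.~draws from $P(\boldsymbol{X}_\bi\in\cdot)$ for $\bi\neq\emptyset$, and show the bad events (stub pointing to a previously explored vertex, self-loop, multiple edge) have probability $O(|\mathcal{V}_{k-1}|^2/L_n)\xrightarrow{P}0$. In the IRD, each yet-unexplored $j$ is an inbound neighbor of $v$ independently with probability $p_{jv}^{(n)}$; using $\mathcal{E}_n\to 0$ I replace these by $r_{jv}^{(n)}\wedge 1$, and then couple the resulting thinned sum with the required Poisson$(cW_v^-)$ variable and attach to it i.i.d.~size-biased attributes drawn from the optimal $W_1$-coupling. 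In both models, the ``$+1$'' on the out-degree of non-root nodes is automatic since the child has one outbound stub/edge already used to reach its parent.

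\emph{Main obstacle and multi-root extension.} The main obstacle is that the assumptions only give first moments of $W^\pm,\mathscr{D}^\pm$, yet the Poisson/size-biasing error bounds naively involve second moments; I would handle this by truncating the weights at a level $M$, controlling the truncated contribution uniformly as above, and then letting $M\to\infty$ after $n\to\infty$, using the $W_1$ convergence to justify exchange of limits (so that all sums $n^{-1}\sum_i W_i^\pm 1\{W_i^\pm>M\}$ vanish in probability as $n\to\infty$ and then $M\to\infty$). Since $k$ is fixed and the limiting tree is a.s.~finite, the accumulated $\rho$-error after $k$ generations is a finite sum of terms each going to zero in probability, giving $\mathbb{P}_n(C_I^{(k,\epsilon)})\xrightarrow{P}1$. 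For the multi-vertex statement, the depth-$k$ neighborhoods of $m$ i.i.d.~uniform vertices are pairwise disjoint except on an event of conditional probability $O(m^2/n)$ (by the same bad-event bound as above), so on the complement the $m$ explorations can be carried out independently and each coupled with an independent copy of $\mathcal{T}^{(k)}(\boldsymbol{A})$ via the single-root construction; the conclusion then follows by summing the single-root error bounds over $j=1,\dots,m$.
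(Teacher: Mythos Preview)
Your overall strategy is sound and could be made to work, but it differs from the paper's in one structural way worth noting. The paper does not couple the graph exploration directly with the limiting tree $\mathcal{T}^{(k)}(\boldsymbol{A})$; instead it introduces an \emph{intermediate} marked Galton--Watson tree $\hat T^{(k)}(\mathbf{\hat A})$ whose node distributions still depend on $\mathscr{F}_n$ (the empirical $D^+$-size-biased law in the DCM, a Poisson mixture over empirical types in the IRD). The coupling $\mathcal{G}_I^{(k)}(\mathbf{a}) \simeq \hat T^{(k)}(\mathbf{\hat A})$ is then \emph{exact} on the marks (Theorem~\ref{T.CouplingD}, quoted from prior work for the directed case), and the passage $\hat T^{(k)}(\mathbf{\hat A}) \to \mathcal{T}^{(k)}(\boldsymbol{A})$ is a pure tree-to-tree coupling (Theorem~\ref{T.TreeToTree}) driven only by $W_1(\nu_n,\nu)\to 0$ and the induced weak convergence of the size-biased attributes (Lemmas~\ref{L.AttributesConv}--\ref{L.WeakConvergence}). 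This two-step decomposition buys modularity: the ``graph is locally a tree'' part is separated from the ``empirical law is close to the limit law'' part, and the size of the exploration is controlled via the a.s.\ finite limiting tree through the tree-to-tree step, with no second-moment or truncation argument inside the graph coupling. Your direct route fuses the two steps, which is why you run into the second-moment obstacle and need the truncate-at-$M$, then $M\to\infty$ device; the paper's $b_n\to\infty$ truncation in the IRD plays a similar role but is built into the definition of the intermediate tree rather than appearing as a double limit.

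Two places in your sketch are imprecise and would need the paper's lemmas (or equivalents) to close. First, the claim that the $D^+$- (or $W^+$-)size-biased empirical attribute law is ``$W_1$-close to the size-biased $\nu$'' does not follow directly from $W_1(\nu_n,\nu)\to 0$: size-biasing by an unbounded coordinate is not $W_1$-continuous in general, and the paper proves only \emph{weak} convergence of the biased laws (Lemma~\ref{L.AttributesConv}) via a dominated-convergence argument, then passes to an almost-sure representation. Second, the multi-root disjointness bound $O(m^2/n)$ is not correct as stated, since under first-moment-only hypotheses the depth-$k$ neighborhoods can have infinite expected size (the size-biased offspring mean may be infinite). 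The paper handles $m\ge 2$ by working with the intermediate trees, conditioning on $\sum_j|\hat T^{(k)}_{\emptyset(I_j)}|\le m x_n$ and on the total size-biased weight of already-sampled labels, and then invoking the a.s.\ finiteness of $\mathcal{T}^{(k)}$ (via Theorem~\ref{T.TreeToTree}) to kill the complement as $x_n\to\infty$ slowly. Your argument can be repaired the same way, but the $O(m^2/n)$ line should be replaced by a bound of the form $O(m^2 x_n^2/L_n) + m\,\mathbb{P}_n(|\hat T^{(k)}|>x_n)$.
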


The remainder of the paper contains the proofs of Theorem~\ref{T.MainU} and Theorem~\ref{T.MainD}.

\section{Proofs}

The proofs of Theorem~\ref{T.MainU} and Theorem~\ref{T.MainD} are based on an intermediate coupling between the breadth-first exploration of the graph $G(V_n, E_n)$ and a delayed marked Galton-Watson process whose offspring distribution and marks still depend on the filtration $\mathscr{F}_n$. This intermediate step consists in coupling $\mathcal{G}^{(k)}_I$ with a marked tree denoted $\hat T^{(k)}(\mathbf{\hat A})$. Interestingly, this coupling will be perfect, in the sense that the vertex/node marks in each of the two graphs will also be identical to each other. The proofs of Theorems~\ref{T.MainU} and \ref{T.MainD} will be complete once we show that $\hat T^{(k)}(\mathbf{\hat A})$ can be coupled with the limiting  $\mathcal{T}^{(k)}(\boldsymbol{A})$.

To organize the exposition, we will separate the undirected case from the directed one. Most of the proofs for the directed case have been established in prior work by the author, and therefore will be omitted; the precise references and the missing details are given in Section~\ref{SS.CouplingDirected}. Once the intermediate coupling theorems are proved, the coupling between the two trees can be done indistinctly for the undirected and directed cases (on the trees, the direction of the edges is irrelevant). 

\subsection{Discrete coupling for undirected graphs}

As mentioned above, the main difference between the intermediate tree and the limiting one lies on the distribution of the marks. As before, we start with the construction of the possibly infinite tree $\hat T$, which is done with the conditionally independent (given $\mathscr{F}_n$) sequence of random vectors in $\mathcal{S}$, $\{ (\hat N_\bi, \mathbf{\hat A}_\bi): \bi \in \mathcal{U}\}$, with $\{ (\hat N_\bi, \mathbf{\hat A}_\bi): \bi \in \mathcal{U}, \bi \neq \emptyset \}$ conditionally i.i.d. Let $\hat A_0 = \{ \emptyset\}$ and recursively define 
$$\hat A_k = \{ (\bi, j): \bi \in \hat A_{k-1}, 1 \leq j \leq \hat N_\bi \}, \qquad k \geq 1.$$
Next, define the full marks according to:
$$\mathbf{\hat X}_\emptyset = (\hat N_\emptyset, \mathbf{\hat A}_\emptyset) \quad \text{and} \quad \mathbf{\hat X}_\bi = (\hat N_\bi+1, \mathbf{\hat A}_\bi), \quad \bi \neq \emptyset,$$
and let $\hat T(\mathbf{\hat A}) = \{ \mathbf{\hat X}_\bi: \bi \in \hat T \}$. We use 

For a CM, the distribution of the full marks is given by:
\begin{align*}
\mathbb{P}_n\left( \mathbf{\hat X}_\emptyset \in \cdot \right) &= \frac{1}{n} \sum_{i=1}^n 1((D_i, {\bf a}_i) \in \cdot),  \\
\mathbb{P}_n\left( \mathbf{\hat X}_\bi \in \cdot \right) &=  \sum_{i=1}^n \frac{D_i}{L_n}1((D_i, {\bf a}_i) \in \cdot), \quad \bi\neq \emptyset .
\end{align*}
For the IR model, first let $\{b_n\}$ be a sequence such that $b_n \xrightarrow{P} \infty$ and $b_n/\sqrt{n} \xrightarrow{P} 0$ as $n\to\infty$, and use it to define $\bar W_i = W_i \wedge b_n$ and 
$$\Lambda_n = \sum_{i=1}^n \bar W_i.$$
The marks on the coupled marked Galton-Watson process are given by:
\begin{align*}
\mathbb{P}_n\left( \mathbf{\hat X}_\emptyset \in \cdot \right) &= \frac{1}{n} \sum_{i=1}^n P( (D_i, {\bf a}_i) \in \cdot | {\bf a}_i),  \\
\mathbb{P}_n\left( \mathbf{\hat X}_\bi \in \cdot \right) &=  \sum_{i=1}^n \frac{\bar W_i}{\Lambda_n} P( (D_i+1, {\bf a}_i) \in \cdot | {\bf a}_i ) , \quad \bi \neq \emptyset,
\end{align*}
where conditionally on ${\bf a}_i$, $D_i$ is a Poisson r.v.~with mean $\Lambda_n \bar W_i/(\theta n)$. 

We will also need to extend our definition of an isomorphism for marked graphs.

\begin{defn} \label{D.MarkIsomorphic}
A graph $G(V,E)$ is called a {\em vertex-weighted} graph if each of its vertices has a mark (weight) assigned to it. 
We say that the two vertex-weighted simple graphs $G(V, E)$ and $G(V', E')$ are {\em isomorphic} if there exists a bijection $\sigma: V \to V'$ such that edge $(i,j) \in E$ if and only if edge $(\sigma(i), \sigma(j)) \in E'$, and in addition, the marks of $i$ and $\sigma(i)$ are the same. 
For vertex-weighted multigraphs, we say that $G(V,E)$ and $G'(V', E')$ are {\em isomorphic} if there exists a bijection $\sigma: V \to V'$ such that $l(i) = l(\sigma(i))$ and $e(i,j) = e(\sigma(i), \sigma(j))$ for all $i \in V$ and all $(i,j) \in E$, where $l(i)$ is the number of self-loops of vertex $i$ and $e(i,j)$ is the number of edges from vertex $i$ to vertex $j$, and in addition, the marks of $i$ and $\sigma(i)$ are the same. In both cases, we write $G \simeq G'$. 
\end{defn}

The intermediate coupling theorem is given below.

\begin{theo} \label{T.Coupling}
Suppose $G(V_n, E_n)$ is either a CM or an IR satisfying Assumption~\ref{A.PrimitivesU}. Then, for $\mathcal{G}^{(k)}_I({\bf a})$ the depth-$k$ neighborhood of a uniformly chosen vertex $I \in V_n$, there exists a marked Galton-Watson tree $\hat T^{(k)} (\mathbf{\hat A})$ restricted to its first $k$ generations, whose root corresponds to vertex $I$, and 
such that for any fixed $k \geq 1$,
$$\mathbb{P}_n\left( \mathcal{G}_I^{(k)}(\mathbf{a}) \not\simeq \hat T^{(k)}(\mathbf{\hat A}) \right) \xrightarrow{P} 0, \qquad n \to \infty.$$
\end{theo}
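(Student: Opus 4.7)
The plan is to construct the coupling by a joint breadth-first exploration of $G(V_n, E_n)$ alongside a step-by-step generation of the marked tree $\hat T(\mathbf{\hat A})$, proceeding by induction on the depth $k$. Since $I$ is uniform on $V_n$, I identify $I$ with the root $\emptyset$; in the CM this immediately makes $\mathbf{X}_I = (D_I, \mathbf{a}_I)$ and $\mathbf{\hat X}_\emptyset = (\hat N_\emptyset, \mathbf{\hat A}_\emptyset)$ share the exact distribution $n^{-1} \sum_i 1((D_i, \mathbf{a}_i) \in \cdot)$, so the root marks match perfectly. In the IR, the actual degree $D_I = \sum_{j \neq I} \xi_{Ij}$ is a sum of conditionally independent Bernoullis with parameters $p_{Ij}^{(n)}$, which I would couple with a Poisson random variable of mean $\Lambda_n \bar W_I/(\theta n)$ via a Stein--Chen argument, using Assumption~\ref{A.PrimitivesU}(B.1) to replace $p_{Ij}^{(n)}$ by $r_{Ij}^{(n)} \wedge 1$.

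For the inductive step in the CM, I would maintain the list of unpaired stubs and pair them one at a time in the order the exploration visits them. Whenever a stub of a vertex $\sigma(\bi)$ being explored is paired, the graph chooses a partner uniformly among the remaining unpaired stubs, whereas the tree samples a vertex $j \in V_n$ with probability $D_j/L_n$. The total variation distance between the two conditional laws is bounded by the fraction of stubs that are either already paired or belong to already-explored vertices, a quantity of order $T/L_n$ where $T$ is the total degree discovered so far. Under $E[\mathscr{D}]<\infty$ and $L_n/n \to E[\mathscr{D}]$, $T$ is bounded in probability through any fixed depth $k$ (it converges in distribution to the size of a Galton--Watson tree), so the per-pairing coupling error is $O_P(1/n)$, and collisions with already-explored vertices or already-paired stubs contribute $O_P(T^2/L_n) = o_P(1)$ across all $O_P(1)$ pairings. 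When the coupling succeeds, the sampled vertex is identified with a new tree node and the marks match automatically.

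For the inductive step in the IR, the conditional independence of edges enables a Poisson thinning construction: at each vertex $i$ at depth $k-1$, draw $\hat N$ from Poisson with mean $\Lambda_n \bar W_i/(\theta n)$ and assign each offspring a vertex from the categorical law $\{\bar W_j/\Lambda_n\}_j$, while in the graph the children are the $j$ with $\xi_{ij} = 1$. The coupling error decomposes into (a) replacing $p_{ij}^{(n)}$ by $r_{ij}^{(n)} \wedge 1$, controlled by Assumption~\ref{A.PrimitivesU}(B.1); (b) restricting to $\bar W_j = W_j \wedge b_n$, with the truncation bias handled by $E[W]<\infty$ and $b_n \to \infty$; and (c) the Poisson approximation of $\sum_j \xi_{ij}$ by a Poisson with parameter $\sum_j (r_{ij}^{(n)}\wedge 1)$, which by Stein--Chen contributes $O_P(b_n^2/n) = o_P(1)$ thanks to $b_n/\sqrt{n}\to 0$. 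Collisions with previously explored vertices again add $o_P(1)$, and the neighbors' marks are revealed in the same step so they match automatically.

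The main obstacle is the simultaneous control of the three error sources in the IR under only a first-moment assumption on $W$. The truncation scale $b_n$ has to balance the Poisson approximation error (which pushes $b_n$ small) against the truncation bias (which pushes $b_n$ large), and the regime $b_n \to \infty$, $b_n/\sqrt{n} \to 0$ is precisely the window in which all three pieces vanish. Assumption~\ref{A.PrimitivesU}(B.1) is the device that decouples the analysis from the specific form of the IR model, letting one reduce each rank-1 variant (Chung--Lu, Norros--Reittu, generalized random graph, etc.) to the canonical rate $r_{ij}^{(n)} \wedge 1$ after which the coupling argument becomes model-agnostic. Summing the errors over the $O_P(1)$ pairings performed through depth $k$ yields $\mathbb{P}_n(\mathcal{G}_I^{(k)}(\mathbf{a}) \not\simeq \hat T^{(k)}(\mathbf{\hat A})) = o_P(1)$, which is the desired conclusion.
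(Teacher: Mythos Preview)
Your CM argument is essentially the paper's: both of you run a breadth-first pairing in which the graph samples a partner uniformly from the \emph{remaining} unpaired stubs while the tree samples uniformly from \emph{all} $L_n$ stubs, and the coupling fails only on a collision with an already-used stub or vertex. The paper makes explicit the truncation step you leave implicit in the phrase ``$O_P(1)$ pairings'': one conditions on $\{|\hat V_{k+1}|\le a_n\}$ for a slowly growing $a_n$, bounds the collision probability on that event by $2ka_n^2/L_n$, and handles the complement via the tree-to-tree coupling (Theorem~\ref{T.TreeToTree}), which is independent of Theorem~\ref{T.Coupling}. Since the size-biased offspring law may have infinite mean, this truncation is not a cosmetic detail; you should say it.

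For the IR the decomposition you give---replace $p_{ij}^{(n)}$ by $r_{ij}^{(n)}\wedge 1$ via Assumption~\ref{A.PrimitivesU}(B.1), truncate $W_j$ at $b_n$, then pass to Poisson---is exactly the paper's, but the coupling mechanism differs. You invoke Stein--Chen on the \emph{count} $\sum_j \xi_{ij}$, and then assert that ``the neighbors' marks are revealed in the same step so they match automatically.'' That last step is where the gap is: a total-variation bound on the degree does not by itself couple the \emph{identities} of the neighbors, which is what you need for the marked isomorphism $\mathcal{G}_I^{(k)}(\mathbf{a})\simeq \hat T^{(k)}(\mathbf{\hat A})$. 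The paper resolves this by coupling edge-by-edge: with shared uniforms $U_{ij}$ it sets $X_{ij}=1(U_{ij}>1-p_{ij}^{(n)})$ and $Z_{ij}=G^{-1}(U_{ij};q_{ij}^{(n)})$, then bounds $\mathbb{P}_n(\max_{j\ne i}|X_{ij}-Z_{ij}|\ge 1)$ directly (Lemma~\ref{L.EdgeDiscrepancy}). On the event that all $X_{ij}=Z_{ij}$, the sets of neighbors---and hence their attributes---coincide exactly, so the marks really do match automatically. Your route can be fixed by applying a Poisson-process version of Stein--Chen to the point configuration $\{j:\xi_{ij}=1\}$ rather than to its cardinality; that yields the same $\sum_j (q_{ij}^{(n)})^2=O(b_n^2/n)$ bound and preserves the labeling, but you need to say this explicitly rather than fold it into ``automatically.''
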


The proof of Theorem~\ref{T.Coupling} is given separately for the two models being considered, the CM and the IR. 

\subsubsection{Coupling for the configuration model} \label{S.CouplingCM}

To explore the neighborhood of depth $k$ of vertex $I \in G(V_n, E_n)$ we start by labeling the set of $L_n$ stubs in such a way that stubs $\{1, \dots, D_1\}$ belong to vertex 1, stubs $\{ D_1+1, \dots, D_1 + D_2\}$ belong to vertex 2, and in general, stubs $\{ D_1 + \dots + D_{m-1} + 1, \dots, D_1 + D_m\}$ belong to vertex $m$. 

For any $k \geq 0$ define the sets:
\begin{align*}
A_k &= \text{set of vertices in $G(V_n, E_n)$ at distance $k$ from vertex $I$.} \\
J_k &= \text{set of stubs belonging to vertices in $A_k$.} \\ 
V_k &= \bigcup_{r=0}^k A_r. \\
\hat A_k &= \text{set of nodes in $\hat T$ at distance $k$ from the root $\emptyset$.} \\
\hat V_k &= \bigcup_{r=0}^k \hat A_r.
\end{align*}
These sets will be constructed as we explore the graph.

To do a breadth-first exploration of $G(V_n, E_n)$ we start by selecting vertex $I$ uniformly at random. Next, let $J_0$ denote the set of stubs belonging to vertex $I$ and set $A_0 = \{ I \}$. For $k \geq 1$, Step $k$ in the exploration will identify all the stubs belonging to nodes in $A_k$. 

{\em Step $k$, $k \geq 1$:} 
\begin{itemize}
\item[a.] Initialize the sets $A_k = J_k = \varnothing$.
\item[b.] For each vertex $i \in A_{k-1}$:
\begin{itemize}
\item[i.] For each of the unpaired stubs of vertex $i$:
\begin{itemize}
\item [1)] Pick an unpaired stub of vertex $i$ and sample uniformly at random a stub from the $L_n$ available. If the chosen stub is the stub currently being paired or if it had already been paired, sample again until an unpaired stub is sampled. 
\item[2)] If the chosen stub belongs to vertex $j$, draw an edge between vertices $i$ and $j$ using the chosen stub. If vertex $j$ had not yet been discovered, add it to $A_k$ and add all of its unpaired stubs to $J_k$. 
\end{itemize}
\end{itemize}
\end{itemize}
The exploration terminates at Step $k$ if $J_k = \varnothing$, at which point the component of $I$ will have been fully explored.

To couple the construction of $\hat T$ initialize $\hat A_0 = \{ \emptyset\}$ and identify $\emptyset$ with vertex $I$ in $G(V_n, E_n)$ and set $\hat N_\emptyset = D_I$, ${\bf a}_\emptyset = {\bf a}_I$. For $k \geq 1$, Step $k$ in the construction will identify all the nodes in $A_k$ by adding nodes in agreement with the exploration of the graph. Each node that is added to the tree will have a number of stubs equal to the total number of stubs of the corresponding vertex, minus one (the one being used to create the edge), regardless of whether some of those stubs may already have been paired. 

 {\em Step $k$, $k \geq 1$:} 
\begin{itemize}
\item[a.] Initialize the set $\hat A_k = \emptyset$. 
\item[b.] For each node ${\bf i} = (i_1, \dots, i_{k-1}) \in \hat A_{k-1}$:
\begin{itemize}
\item[i.] For each $1 \leq r \leq \hat N_{\bf i}$:
\begin{itemize}
\item[1)] Pick a stub uniformly at random from the $L_n$ available. 
\item[2)] If the chosen stub belongs to vertex $j$, then add node $({\bf i},r)$ to $\hat A_k$ and set $\hat N_{({\bf i}, r)} = D_j - 1$, $\mathbf{\hat A}_{({\bf i},r)} = {\bf a}_j$. 
\end{itemize}
\end{itemize}
\end{itemize}
This process will end in Step $k$ if $\hat N_{\bf i} = 0$ for all ${\bf i} \in \hat A_k$, or it may continue indefinitely. 

Note that the coupling relies on using the same uniform random numbers in step (b)(i)(1) for the two constructions. Specifically, there is one uniform for each of the $L_n$ stubs which is used in step $(b)(i)(1)$ of the tree construction, and in the first pick in the acceptance-rejection step $(b)(i)(1)$ of the graph exploration; in case of a rejection, independent uniforms are used until a stub is accepted.

\begin{defn}
We say that the coupling breaks in generation $\tau = k$ if:
\begin{itemize}
\item The first time we have to resample a stub in step $(b)(i)(1)$ occurs while exploring a stub belonging to a vertex in $A_{k-1}$; or
\item If given that the above has not happened, a stub belonging to a vertex in $A_{k-1}$ is paired with a stub belonging to a previously encountered vertex (this vertex could be in either $A_{k-1}$ or the current set $A_k$). 
\end{itemize}
\end{defn}

{\bf Note:} The exploration of the component of depth $k$ of vertex $I$ in $G(V_n, E_n)$ and the construction of the first $k$ generations of the tree $\hat T$ will be identical provided $\tau > k$, meaning $\mathcal{G}_I^{(k)}(\mathbf{a}) \simeq \hat T^{(k)}(\mathbf{\hat A})$  in the sense of Definition~\ref{D.MarkIsomorphic}. This particular construction is standard in the analysis of the configuration model, with small variations depending on whether one needs to keep track of completed generations or simply the number of stubs that have been paired. We refer the reader to Chapter 4 in \cite{ Hofstad2} for the stub by stub version and a full history of the model. Earlier versions of the coupling imposed finite second moment conditions that more recent proofs can omit (see, e.g., \cite{Hofstad2005}).

\begin{proof}[Proof of Theorem~\ref{T.Coupling} ($m=1$) for the CM]
From the observation made above, it suffices to show that the exploration of the $k$-neighborhood of vertex $I$ does not require us to resample any stub in step $(b)(i)(1)$ nor samples a stub belonging to a vertex that had already been discovered. To compute the probability of successfully completing $k$ generations in $\hat T$ before the coupling breaks, write:
\begin{align*}
\mathbb{P}_n \left( \mathcal{G}_I^{(k)}(\mathbf{a}) \neq \hat T^{(k)}(\mathbf{\hat A}) \right) &\leq \mathbb{P}_n(\tau \leq k) .
\end{align*}

The coupling breaks the first time we draw a stub belonging to a vertex that has already been explored: either a stub already paired, or one that is unpaired but already attached to the graph. The number of paired stubs when exploring a vertex in $A_{r-1}$ is smaller or equal than $2 \sum_{j=1}^{r} |A_j| +  |J_{r}| $, which corresponds to two stubs each for the vertices at distance at most $r$ of $I$ and the unpaired stubs belonging to nodes in $J_{r}$. Note that up to the moment that the coupling breaks, we have $|A_j| = |\hat A_j|$ for all $0 \leq j \leq r$, and $|J_r| = |\hat A_{r+1}|$, so the probability that we break the coupling while exploring a vertex in $A_{r-1}$ is smaller or equal than
$$P_{r} := \frac{2}{L_n} \sum_{j=1}^{r+1} |\hat A_j| \leq \frac{2 |\hat V_{r+1}|}{L_n}, \qquad r \geq 1.$$ 

It follows that for any $a_n > 0$, 
\begin{align*}
\mathbb{P}_n(\tau \leq k) &= \mathbb{P}_n(\tau \leq k, |\hat V_{k+1}| \leq a_n ) + \mathbb{P}_n ( |\hat V_{k+1}| > a_n) \\
&\leq \sum_{r=1}^k \mathbb{P}_n(\tau = r, |\hat V_{r+1}| \leq a_n ) + \mathbb{P}_n ( |\hat V_{k+1}| > a_n) \\
&\leq \sum_{r=1}^k \mathbb{P}_n \left(\text{Bin}(\hat A_{r-1}, P_r) \geq 1, |\hat V_{r+1}| \leq a_n \right) + \mathbb{P}_n ( |\hat V_{k+1}| > a_n) \\
&\leq \sum_{r=1}^k \mathbb{P}_n \left(\text{Bin}(a_n, 2a_n/L_n) \geq 1 \right) + \mathbb{P}_n ( |\hat V_{k+1}| > a_n) \\
&\leq \sum_{r=1}^k \frac{2a_n^2}{L_n} + \mathbb{P}_n ( |\hat V_{k+1}| > a_n),
\end{align*}
where $\text{Bin}(n,p)$ represents a binomial random variable with parameters $(n,p)$. Hence, we have
$$\mathbb{P}_n \left( \mathcal{G}_I^{(k)}(\mathbf{a})  \not\simeq \hat T^{(k)}(\mathbf{\hat A}) \right)  \leq \mathbb{P}_n(\tau \leq k) \leq  \frac{2k a_n^2}{L_n} + \mathbb{P}_n ( |\hat V_{k+1}| > a_n).$$

To analyze the last probability we use the first part of Theorem~\ref{T.TreeToTree} to obtain that for any fixed $k \geq 1$ there exists a tree $\mathcal{T}^{(k)}$ of depth $k$, whose distribution does not depend on $\mathscr{F}_n$, such that
$$\mathbb{P}_n \left( \hat T^{(k)} \not\simeq \mathcal{T}^{(k)} \right) \xrightarrow{P} 0,$$
as $n \to \infty$. Let $|\mathcal{A}_k|$ denote the size of the $k$th generation of that tree, define $|\mathcal{V}_{k+1}| = \sum_{j=0}^{k+1} |\mathcal{A}_j|$, and note that
\begin{align*}
\mathbb{P}_n \left( \mathcal{G}_I^{(k)}(\mathbf{a})  \not\simeq \hat T^{(k)}(\mathbf{\hat A}) \right)  &\leq   \frac{2k a_n^2}{L_n} + P ( |\mathcal{V}_{k+1}| > a_n) + \mathbb{P}_n \left( \hat T^{(k)} \not\simeq \mathcal{T}^{(k)} \right).
\end{align*}
Choosing $a_n \xrightarrow{P} \infty$ so that $a_n^2/n \xrightarrow{P} 0$ as $n \to \infty$, and observing that $|\mathcal{V}_{k+1}| < \infty$ a.s.,~completes the proof. 
\end{proof}

\subsubsection{Coupling for the inhomogeneous random graph} \label{S.CouplingIR}

We will couple the exploration of the component of vertex $I \in G(V_n, E_n)$ with a marked multi-type Galton-Watson process with $n$ types, one for each vertex in $G(V_n, E_n)$. A node of type $i \in \{1, 2, \dots, n\}$ in the tree will have a Poisson number of offspring of type $j$ with mean:
$$q_{ij}^{(n)} = \frac{\bar W_i \bar W_j}{\theta n}, \qquad 1\leq j \leq n.$$

Similarly as in the case of the CM, define:
\begin{align*}
A_k &= \text{set of vertices in $G(V_n, E_n)$ at distance $k$ from vertex $I$.} \\
V_k &= \bigcup_{r=0}^k A_r. \\
\hat A_k &= \text{set of nodes in $\hat T$ at distance $k$ from the root $\emptyset$.} \\
\hat B_k &= \text{set of types of nodes in $\hat A_k$.} \\
\hat V_k &= \bigcup_{r=0}^k \hat A_r.
\end{align*}

We will again do a breadth-first exploration of $G(V_n, E_n)$ starting from a uniformly chosen vertex $I$. To start, let $\{ U_{ij}: i,j \geq 1\}$ be a sequence of i.i.d.~Uniform$[0,1]$ random variables, independent of $\mathscr{F}_n$. We will use this sequence of i.i.d.~uniforms to realize the Bernoulli random variables that determine the presence/absence of edges in $G(V_n, E_n)$. Set $A_0 = \{ I \}$ and initialize the set $J = \varnothing$; the set $J$ will keep track of the vertices that have been fully explored (all its potential edges realized), and will coincide with $V_{k-1}$ at the end of Step $k$. 

{\em Step $k$, $k\geq 1$:} 
\begin{itemize}
\item[a.] Initialize the set $A_k = \varnothing$.
\item[b.] For each vertex $i \in A_{k-1}$:
\begin{itemize}
\item[i.] Let $X_{ij} = 1(U_{ij} > 1- p_{ij}^{(n)})$ for each $j \in \{1, 2, \dots, n\} \setminus J$.
\item[ii.] If $X_{ij} = 1$ draw an edge between vertices $i$ and $j$ and add vertex $j$ to $A_k$. 
\item[iii.] Add vertex $i$ to set $J$. 
\end{itemize}
\end{itemize}
The exploration terminates at the end of Step $k$ if $A_k = \varnothing$, at which point the component of $I$ will have been fully explored.

To couple the construction of $\hat T$ initialize $\hat A_0 = \{ \emptyset\}$ and identify $\emptyset$ with vertex $I$ in $G(V_n, E_n)$ as before; let $\hat B_0 = \{ I \}$. To construct the tree, we will sample for a node of type $i$ a Poisson number of offspring of type $j$ for each $j \in \{1, \dots, n\}$. To do this, let $G(\cdot; \lambda)$ be the cumulative distribution function of a Poisson random variable with mean $\lambda$, and let $G^{-1}(u; \lambda) = \inf\{ x \in \mathbb{R}: G(x; \lambda) \geq u\}$ denote its pseudoinverse. In order to keep the tree coupled with the exploration of the graph we will use the same sequence of i.i.d.~uniform random variables used to sample the edges in the graph. Initialize the set $\hat J = \varnothing$, which will keep track of the types that have appeared and whose offspring have been sampled. The precise construction is given below:

 {\em Step $k$, $k \geq 1$:} 
\begin{itemize}
\item[a.] Initialize the sets $\hat A_k =  \hat B_k = \varnothing$. 
\item[b.] For each node ${\bf i} = (i_1, \dots, i_{k-1}) \in \hat A_{k-1}$:
\begin{itemize}
\item[i.] If ${\bf i}$ has type $t \notin \hat J$:
\begin{itemize}
\item[1)] For each type $j \in \{1, \dots, n\} \setminus \hat J$ let $Z_{tj} = G^{-1}(U_{tj}; q_{tj}^{(n)})$, and create $Z_{tj}$ children of type $j$ for node ${\bf i}$. If $Z_{tj} \geq 1$, create $Z_{tj}$ children of type $j$ for node ${\bf i}$, each with node attribute equal to $\mathbf{a}_j$, and add $j$ to set $\hat B_k$. 
\item[2)] For each type $j \in \hat J$ sample $Z_{tj}^* \sim$ Poisson$(q_{tj}^{(n)})$, independently of the sequence $\{U_{ij}: i,j \geq 1\}$ and any other random variables. If $Z_{tj}^* \geq 1$ create $Z_{tj}^*$ children of type $j$ for node ${\bf i}$, each with attribute equal to $\mathbf{a}_j$.
\item[3)] Randomly shuffle all the children created in steps (b)(i)(1) and (b)(i)(2) and give them labels of the form $({\bf i}, j)$, then add the labeled nodes to set $\hat A_k$. The node attributes will be denoted $\mathbf{\hat A}_{(\bi,j)} = \mathbf{a}_j$. (The shuffling avoids the label from providing information about its type). 
\item[4)] Add type $t$ to set $\hat J$. 
\end{itemize}
\item[ii.] If ${\bf i}$ has type $t\in \hat J$:
\begin{itemize}
\item[1)] For each type $j \in \{1, \dots, n\}$ sample $Z_{tj}^* \sim$ Poisson$(q_{tj}^{(n)})$, independently of the sequence $\{U_{ij}: i,j \geq 1\}$ and any other random variables; create $Z_{tj}^*$ children of type $j$ for node ${\bf i}$, each with attribute equal to $\mathbf{a}_j$.
\item[2)] Randomly shuffle all the children created in step (b)(ii)(1) and give them labels of the form $({\bf i},j)$, attributes $\mathbf{\hat A}_{(\bi,j)} = \mathbf{a}_j$, and add the labeled nodes to set $\hat A_k$. 
\end{itemize}
\end{itemize}
\end{itemize}
This construction may continue indefinitely, or may terminate at the end of Step $k$ if $\hat A_k = \varnothing$. 

We point out that this coupling is not standard, since in most of the literature on IR models the coupling is done between the binomial distribution (the degree of a vertex) and its coupled Poisson limit (see \cite{Boll_Jan_Rio_07} or Chapter 3 in \cite{Hofstad2}, where the proof uses the moments method). Moreover, most of the existing couplings consider first a multi-type Galton Watson process with finitely many types, and then use a monotonicity argument to obtain the general case. The coupling described above avoids the need for this second step since the number of types grows as $n \to \infty$, and since it is based on coupling the individual Bernoulli random variables (edges) with their Poisson counterparts, it  allows us to keep track of the vertex marks at no additional effort.

\begin{defn}
We say that the coupling breaks in generation $\tau = k$ if for any node in $\hat A_{k-1}$ either:
\begin{itemize}
\item In step (b)(i)(1) we have $Z_{tj} \neq X_{tj}$ for some $j \in \{1, \dots, n\} \setminus  \hat J $;
\item In step (b)(i)(1) we have $Z_{tj} \geq 1$ for some $j \in  (\hat B_{k-1} \cup \hat B_k) \setminus \hat J$, in which case a cycle or self-loop is created; or,
\item In step (b)(i)(2) we have $Z_{tj}^* \geq 1$ for some $j \in \hat J$. 
\end{itemize}
\end{defn}

We start by proving the following preliminary result. Throughout this section, let 
$$\Delta_n := \int_0^1 \left| F_n^{-1}(u) - F^{-1}(u) \right| du \leq W_1(\nu_n, \nu),$$
where $F_n(x) = \frac{1}{n} \sum_{j=1}^n 1(W_i \leq x)$ and $F(x) = P(W \leq x)$. We also use the notation $X_n = O_P( x_n)$ as $n \to \infty$ to mean that there exists a random variable $Y_n$ such that $|X_n |\leq_\text{s.t.} Y_n$ and $Y_n/x_n \xrightarrow{P} K$ for some finite constant $K$. 

\begin{lemma} \label{L.EdgeDiscrepancy}
For any $1 \leq i \leq n$ we have
\begin{align*}
 \mathbb{P}_n\left( \max_{1 \leq j \leq n, j \neq i} |X_{ji} - Z_{ji}| \geq 1 \right) &\leq \min\left\{ 1, 1(W_i > b_n) + \mathcal{P}_n(i) + \bar W_i \eta_n \right\}, 
\end{align*}
where 
$$\mathcal{P}_n(i) =  \sum_{1 \leq j \leq n, j \neq i} \left| p_{ji}^{(n)} - (r_{ji}^{(n)} \wedge 1) \right| , $$
$$\eta_n = ( \Delta_n + g(b_n) +  b_n^2/n + b_n^2 \Delta_n/(\theta n))/\theta, \qquad  \text{and} \qquad g(x) = E[(W - x)^+].$$ 
\end{lemma}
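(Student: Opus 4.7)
The strategy is a union bound over $j$ followed by a single-edge Bernoulli-versus-Poisson coupling bound exploiting the shared uniform $U_{ji}$. Because $X_{ji}$ and $Z_{ji}$ are integer valued,
$$\mathbb{P}_n\!\left(\max_{j\neq i}|X_{ji}-Z_{ji}|\geq 1\right) \;\leq\; \sum_{j\neq i}\mathbb{P}_n(X_{ji}\neq Z_{ji}),$$
and since $X_{ji}=1(U_{ji}>1-p_{ji}^{(n)})$ while $Z_{ji}=G^{-1}(U_{ji};q_{ji}^{(n)})$, a direct case analysis on the level sets of $U_{ji}$ gives
$$\mathbb{P}_n(X_{ji}\neq Z_{ji}) \;\leq\; \bigl|(1-p_{ji}^{(n)}) - e^{-q_{ji}^{(n)}}\bigr| + \mathbb{P}_n(Z_{ji}\geq 2) \;\leq\; |p_{ji}^{(n)}-q_{ji}^{(n)}| + (q_{ji}^{(n)})^2,$$
using the elementary bounds $|e^{-\lambda}-(1-\lambda)|\leq \lambda^2/2$ and $1-e^{-\lambda}(1+\lambda)\leq \lambda^2/2$ valid for $\lambda\geq 0$.

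Next I split $|p_{ji}^{(n)}-q_{ji}^{(n)}|$ through the intermediate quantity $r_{ji}^{(n)}\wedge 1$; summing the first piece over $j$ reproduces $\mathcal{P}_n(i)$ verbatim. For the second piece, $\bar W_k\leq W_k$ gives $q_{ji}^{(n)}\leq r_{ji}^{(n)}$, so on $\{W_i\leq b_n\}$,
$$|r_{ji}^{(n)}\wedge 1 - q_{ji}^{(n)}| \;\leq\; \frac{W_jW_i - \bar W_j\bar W_i}{\theta n} \;=\; \frac{\bar W_i (W_j-b_n)^+}{\theta n}.$$
Since $x\mapsto (x-b_n)^+$ is 1-Lipschitz, the quantile representation of $\Delta_n$ yields $\frac{1}{n}\sum_j(W_j-b_n)^+\leq g(b_n)+\Delta_n$, contributing $\bar W_i(g(b_n)+\Delta_n)/\theta$ to the sum. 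For the squared-Poisson term, using $\bar W_j,\bar W_i\leq b_n$,
$$\sum_{j\neq i}(q_{ji}^{(n)})^2 \;=\; \frac{\bar W_i^2}{(\theta n)^2}\sum_j\bar W_j^2 \;\leq\; \frac{\bar W_i\, b_n^2\,\Lambda_n}{\theta^2 n^2},$$
and the same Lipschitz/quantile argument applied to $x\mapsto x\wedge b_n$ gives $\Lambda_n/n\leq \theta+\Delta_n$, which produces exactly the remaining $\bar W_i\bigl(b_n^2/n + b_n^2\Delta_n/(\theta n)\bigr)/\theta$ inside $\bar W_i\eta_n$.

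Combining the three contributions yields $\mathcal{P}_n(i)+\bar W_i\eta_n$ on $\{W_i\leq b_n\}$; on the complement $\{W_i>b_n\}$ the trivial probability bound of $1$ is absorbed by the indicator $1(W_i>b_n)$, and the outer $\min$ with $1$ is automatic since the left-hand side is a probability. The Bernoulli-Poisson coupling step is standard and is not the main obstacle; the only delicate bookkeeping lies in making the three pieces align exactly with $\eta_n$, which requires extracting one factor of $b_n$ as $\bar W_i$ when squaring and using the sharp quantile bound $\Lambda_n/n\leq\theta+\Delta_n$ rather than a loose $\Lambda_n=O_P(n)$ estimate.
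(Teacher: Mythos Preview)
Your argument is correct and matches the paper's: union bound over $j$, Bernoulli--Poisson comparison on the shared uniform to get a per-edge error of order $|p_{ji}^{(n)}-(r_{ji}^{(n)}\wedge1)|+(r_{ji}^{(n)}-q_{ji}^{(n)})+(q_{ji}^{(n)})^2$, then summation using the quantile bounds $\frac{1}{n}\sum_j(W_j-b_n)^+\leq\Delta_n+g(b_n)$ and $\Lambda_n/n\leq E[W]+\Delta_n$. The only difference is organizational---the paper introduces an intermediate Bernoulli $R_{ji}=1(U_{ji}>1-(r_{ji}^{(n)}\wedge1))$ and splits $\mathbb{P}_n(X_{ji}\neq Z_{ji})\leq\mathbb{P}_n(X_{ji}\neq R_{ji})+\mathbb{P}_n(R_{ji}\neq Z_{ji})$ before the Poisson comparison, which incidentally avoids your step $|r_{ji}^{(n)}\wedge1-q_{ji}^{(n)}|\leq r_{ji}^{(n)}-q_{ji}^{(n)}$ (this tacitly needs $q_{ji}^{(n)}\leq1$, harmless here since $q_{ji}^{(n)}\leq b_n^2/(\theta n)$).
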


\begin{proof}
Let $R_{ij} = 1(U_{ij} > 1- r_{ij}^{(n)})$ with $r_{ij}^{(n)} = W_i W_j/(\theta n)$. The union bound gives:
\begin{align*}
\mathbb{P}_n\left( \max_{1 \leq j \leq n, j \neq i} |X_{ij} - Z_{ij}| \geq 1 \right) &\leq  1(W_i > b_n) + 1(W_i \leq b_n) \sum_{1 \leq j \leq n, j \neq i} \mathbb{P}_n( |X_{ij} - Z_{ij}| \geq 1).
\end{align*}
Now note that
\begin{align*}
\mathbb{P}_n( |X_{ij} - Z_{ij}| \geq 1) &= \mathbb{P}_n( |X_{ij} - Z_{ij}| \geq 1, |X_{ij} - R_{ij}| \geq 1) \\
&\hspace{5mm} + \mathbb{P}_n( |X_{ij} - Z_{ij}| \geq 1, |X_{ij} - Ri_{ij}| = 0) \\
&\leq  \mathbb{P}_n( |X_{ij} - R_{ij}| \geq 1) + \mathbb{P}_n( |R_{ij} - Z_{ij}| \geq 1). 
\end{align*}
The first probability can be computed to be:
\begin{align*}
\mathbb{P}_n( |X_{ij} - Z_{ij}| \geq 1) &= | p_{ij}^{(n)} - (r_{ij}^{(n)} \wedge 1)|. 
\end{align*}

To analyze each of probabilities involving $R_{ij}$ and $Z_{ij}$, note that 
\begin{align*}
\mathbb{P}_{n} \left( |R_{ij} - Z_{ij}| \geq 1 \right) &= \mathbb{P}_n(R_{ij} = 0, Z_{ij} \geq 1) + \mathbb{P}_n(R_{ij} = 1, Z_{ij} = 0) + \mathbb{P}_n(R_{ij} = 1, Z_{ij} \geq 2) \notag \\
&= \left( 1- (1 \wedge r_{ij}^{(n)}) - e^{-q_{ij}^{(n)}} \right)^+ +  \left( e^{-q_{ij}^{(n)}}  - 1 + (1 \wedge r_{ij}^{(n)})  \right)^+ \notag \\
&\hspace{5mm} + \min\left\{ 1 - e^{-q_{ij}^{(n)}} (1 + q_{ij}^{(n)}) , \, (1 \wedge r_{ij}^{(n)} )  \right\}  \notag\\
&= \left| 1- (1 \wedge r_{ij}^{(n)})  - e^{-q_{ij}^{(n)}} \right| + \min\left\{ (1 \wedge r_{ij}^{(n)}), \, e^{-q_{ij}^{(n)}} ( e^{q_{ij}^{(n)}} - 1 - q_{ij}^{(n)} ) \right\} .
\end{align*}
Now use the inequalities $e^{-x} \geq 1-x$, $e^{-x} - 1 + x \leq x^2/2$ and $e^x - 1 - x \leq x^2 e^x/2$ for $x \geq 0$, to obtain that
\begin{align*}
\mathbb{P}_{n} \left( |X_{ij} - Z_{ij}| \geq 1 \right) &\leq   r_{ij}^{(n)} - q_{ij}^{(n)} + \left| 1 - q_{ij}^{(n)} - e^{-q_{ij}^{(n)}} \right| + e^{-q_{ij}^{(n)}} ( e^{q_{ij}^{(n)}} - 1 - q_{ij}^{(n)} ) \\
&= r_{ij}^{(n)} - q_{ij}^{(n)} + e^{-q_{ij}^{(n)}} - 1 + q_{ij}^{(n)} +  e^{-q_{ij}^{(n)}} ( e^{q_{ij}^{(n)}} - 1 - q_{ij}^{(n)} ) \\
&\leq  r_{ij}^{(n)} - q_{ij}^{(n)} + (q_{ij}^{(n)})^2.
\end{align*}

It follows that
\begin{align*}
&1(W_i \leq b_n) \sum_{1 \leq j \leq n, j \neq i} \mathbb{P}_n( |X_{ij} - Z_{ij}| \geq 1) \\
&\leq 1(W_i \leq b_n)  \sum_{1 \leq j \leq n, j \neq i} \left( | p_{ij}^{(n)} - (r_{ij}^{(n)} \wedge 1)| + r_{ij}^{(n)} - q_{ij}^{(n)} + (q_{ij}^{(n)})^2   \right)  \\
&\leq \mathcal{P}_n(i) + \sum_{1 \leq j \leq n, j \neq i} \frac{\bar W_i (W_j - {\bar W}_j)}{\theta n}  + \frac{({\bar W}_i)^2}{(\theta n)^2} \sum_{1 \leq j \leq n, j \neq i} ({\bar W}_j)^2 \\
&\leq \mathcal{P}_n(i) + \frac{\bar W_i}{\theta n} \sum_{j=1}^n (W_j - b_n)^+  + \frac{({\bar W}_i)^2 b_n \Lambda_n}{(\theta n)^2}.
\end{align*}
To further bound the second term note that if we let $W^{(n)}$ denote a random variable distributed according to $F_{n}$ and $W$ a random variable distributed according to $F$, then
$$\frac{1}{n} \sum_{j=1}^n (W_j - b_n)^+ = \mathbb{E}_n\left[ (W^{(n)} - b_n)^+ \right] \leq  \mathbb{E}_n \left[ |W^{(n)} -W|+ (W - b_n)^+ \right] = \Delta_n + g(b_n).$$
And for the last term, 
$$\frac{({\bar W}_i)^2 b_n \Lambda_n}{(\theta n)^2} \leq \frac{\bar W_i b_n^2}{\theta^2 n} \cdot \mathbb{E}_n\left[ W^{(n)}  \right]  \leq \frac{\bar W_i b_n^2}{\theta^2 n} \left( \Delta_n + E[W] \right) .$$

We conclude that for $\mathcal{E}_n$ as defined in the statement of the lemma, 
\begin{align*}
1(W_i \leq b_n) \sum_{1 \leq j \leq n, j \neq i} \mathbb{P}_n( |X_{ij} - Z_{ij}| \geq 1) &\leq  \mathcal{P}_n(i) + \frac{\bar W_i}{\theta} ( \Delta_n + g(b_n)) + \frac{\bar W_i  b_n^2}{\theta^2 n} (\Delta_n + E[W^-]) \\
&\leq \mathcal{P}_n(i) + \eta_n \bar W_i,
\end{align*}
which in turn yields
\begin{align*}
\mathbb{P}_n\left( \max_{1 \leq j \leq n, j \neq i} |X_{ij} - Z_{ij}| \geq 1 \right) &\leq \min\left\{ 1, 1(W_i > b_n) + \mathcal{P}_n(i) + \eta_n \bar W_i \right\}  .
\end{align*}
\end{proof}

\begin{proof}[Proof of Theorem~\ref{T.Coupling} ($m=1$) for the IR]
We start by defining the following events:
\begin{align*}
F_{i}(I, J, L) &=  \left\{  \max_{j \in I} |X_{ji} - Z_{ji}|= 0, \, \sum_{j \in J} Z_{ji}^* + \sum_{ j \in L}  Z_{ji}  = 0  \right\}, \\
\mathbb{B}_{i} &= \{ \text{ current set $\hat B_{k-1} \cup \hat B_k$ when the neighbors of $i \in A_{k-1}$ are explored} \} ,\\
\mathbb{J}_i &= \{ \text{ current set $J$ when the neighbors of $i$ are explored} \} ,\\
H_k &= \bigcap_{i \in A_{k-1}} F_i(\{ 1, \dots, n\} \setminus \mathbb{J}_i, \mathbb{J}_i, \mathbb{B}_{i} \setminus \mathbb{J}_i  ), \\
M_k &= \{ |\hat V_k| \leq s_n\}. 
\end{align*}
Next, note that
\begin{align*}
\mathbb{P}_{n} (\tau \leq k) &\leq \mathbb{P}_{n} (\tau \leq k, M_k ) + \mathbb{P}_{n} (M_k^c) \\
&\leq \sum_{r=1}^k  \frac{1}{n} \sum_{i=1}^n \mathbb{P}_{n,i} (\tau =r, M_r ) + \mathbb{P}_{n} (M_k^c) ,
\end{align*}
where the last probability can be bounded using the first part of Theorem~\ref{T.TreeToTree} as it was done at the end of the proof of Theorem~\ref{T.Coupling} for the CM. Specifically,
$$\mathbb{P}_{n} (M_k^c) \leq P\left( |\mathcal{V}_{k+1}| > s_n \right) + \mathbb{P}_n\left( \hat T^{(k)} \not\simeq \mathcal{T}^{(k)} \right),$$
where $|\mathcal{V}_{k+1}| = \sum_{j=0}^{k+1} |\mathcal{A}_j| < \infty$ a.s.~and the distribution of $\mathcal{T}^{(k)}$ does not depend on $\mathscr{F}_n$. 

Now note that for any $r \geq 1$, 
\begin{align*}
\mathbb{P}_{n,i}\left(\tau = r, M_{r} \right) &= \mathbb{P}_{n,i}\left( M_{r} \cap  \bigcap_{m=1}^{r-1}  H_{m} \cap H_r^c \right) , \\
\end{align*}
with the convention that $\bigcap_{m=1}^0 H_m = \Omega$.  Let $\mathcal{F}_t$ denote the sigma-algebra that contains the history of the exploration process in the graph as well as that of its coupled tree, up to the end of Step $t$ of the graph exploration process. It follows that we can write:
\begin{align*}
\mathbb{P}_{n,i}(\tau = r, M_{r}) &= \mathbb{E}_{n,i}\left[ 1\left( M_{r-1}  \cap  \bigcap_{m=1}^{r-1} H_{m}  \right) \mathbb{P}_n( M_r \cap H_r^c | \mathcal{F}_{r-1}) \right] .
\end{align*}
To analyze the conditional probability inside the expectation above note that conditionally on $\mathcal{F}_{r-1}$, the set $A_{r-1}$ is known, and recall that the set $J = V_{r-2}$ at the beginning of Step $r$ (assuming $r \geq 2$, otherwise, $J = \varnothing$).  Therefore, by the union bound and the independence among the edges, we have:
\begin{align*}
\mathbb{P}_n( M_r \cap H_r^c | \mathcal{F}_{r-1}) &= \mathbb{P}_n\left( \left. M_r \cap \bigcup_{i \in A_{r-1}} F_i(\{ 1, \dots, n\} \setminus \mathbb{J}_i, \mathbb{J}_i, \mathbb{B}_{i} \setminus \mathbb{J}_i  )^c \right| \mathcal{F}_{r-1} \right)  \\
&\leq \sum_{i \in A_{r-1} }  \mathbb{P}_n\left( \left. M_r \cap F_i(\{ 1, \dots, n\} \setminus \mathbb{J}_i, \mathbb{J}_i, \mathbb{B}_{i} \setminus \mathbb{J}_i  )^c \right| \mathcal{F}_{r-1} \right)  \\
&\leq \sum_{i \in A_{r-1} }   \min\left\{  1, \, \mathbb{P}_n\left( \left.  \max_{j \in \{ 1, \dots, n\} \setminus \mathbb{J}_i} |X_{ji} - Z_{ji}| \geq 1   \right| \mathcal{F}_{r-1} \right) \right. \\
&\hspace{5mm} + \left.  \mathbb{P}_n\left( \left. M_r \cap \left\{ \sum_{j \in \mathbb{J}_i} Z_{ji}^* +  \sum_{ j \in \mathbb{B}_i \setminus \mathbb{J}_i}  Z_{ji} \geq 1\right\}   \right| \mathcal{F}_{r-1} \right)  \right\}   .
\end{align*}
Now use the independence of the edges from the rest of the exploration process and Lemma~\ref{L.EdgeDiscrepancy} to obtain that
\begin{align*}
 \mathbb{P}_n\left(  \left. \max_{\{1, \dots, n\} \setminus \mathbb{J}_i i} |X_{ji} - Z_{ji}| \geq 1 \right| \mathcal{F}_{r-1} \right) &\leq  \mathbb{P}_n\left(  \max_{1 \leq j \leq n, j \neq i} |X_{ji} - Z_{ji}| \geq 1  \right) \\
 &\leq 1(W_i > b_n) + \mathcal{P}_n(i) + \bar W_i \eta_n .
\end{align*}
Next, condition further on the exploration up to the moment we are about to explore the  neighbors of $i$, and use the independence of the edges from the rest of the exploration process to obtain that
\begin{align*}
& \mathbb{P}_n\left(  \left. M_r \cap \left\{ \sum_{j \in \mathbb{J}_i} Z_{ji}^* +  \sum_{ j \in \mathbb{B}_i \setminus \mathbb{J}_i}  Z_{ji} \geq 1\right\}   \right| \mathcal{F}_{r-1}\right) \\
&\leq \mathbb{E}_n\left[ \left. 1( |\hat V_r| \leq s_n) \left(  1 - e^{ -  \sum_{j \in \mathbb{B}_i} q_{ji}^{(n)} }  \right) \right| \mathcal{F}_{r-1} \right] \\
 &= \mathbb{E}_n\left[ \left. 1(|\hat V_r| \leq s_n) \left( 1 - e^{-\frac{{\bar W}_i}{\theta n}   \sum_{j \in \cup \mathbb{B}_i} {\bar W}_j  } \right) \right| \mathcal{F}_{r-1} \right]  \\
 &\leq \mathbb{E}_n\left[ \left. 1(|\hat V_r| \leq s_n) \left(  1 - e^{-\frac{b_n {\bar W}_i}{\theta n}  |\mathbb{B}_i| }  \right) \right| \mathcal{F}_{r-1}  \right] \\
 &\leq  \frac{b_n {\bar W}_i}{\theta n}  s_n ,
\end{align*}
where in the last inequality we used $1 - e^{-x} \leq x$ for $x \geq 0$ and $|\mathbb{B}_i| \leq |\hat V_r| \leq s_n$. 

It follows that 
\begin{align*}
\mathbb{P}_{n,i}(\tau = r, M_{r})  &\leq  \mathbb{E}_{n,i}\left[ 1\left( M_{r-1}  \cap  \bigcap_{m=1}^{r-1} H_{m} \right)  \sum_{j \in A_{r-1}}  \min\left\{1, \, 1(W_i > b_n) +  \mathcal{P}_n(j) + \bar W_i \eta_n \phantom{\frac{\bar W^-}{\theta}} \right. \right.  \\
&\hspace{5mm} \left.  \left. +  \frac{b_n {\bar W}_i}{\theta n}  s_n  \right\}  \right] .
\end{align*}
To analyze this remaining expectation we note that on the event $ \bigcap_{m=1}^{r-1}  H_{m} $  the coupling has not broken yet, and therefore we can can replace $A_{r-1}$ with its tree counterpart $\hat A_{r-1}$.  Also, note that by Lemma~3.4 in \cite{Lee_Olv_20} we have that the types of the nodes in each of the sets $\hat A_k$ are independent of the type of their parents. We will then identify the nodes in $\hat A_{r-1}$ as $\{Y_1, \dots, Y_{|\hat A_{r-1}|}\}$, where for any $t \geq 1$, 
$$\mathbb{P}_n(Y_{t} = j) = \frac{{\bar W}_j}{\Lambda_n}, \quad j = 1, 2, \dots, n.$$

It follows that
\begin{align*}
\mathbb{P}_{n,i}(\tau = r, M_{r})  &\leq  \mathbb{E}_{n,i}\left[ 1\left( M_{r-1} \right) \sum_{t=1}^{|\hat A_{r-1}|}  \min\left\{ 1,  \,  1(W_{Y_t} > b_n) + \mathcal{P}_n(Y_t ) +  \bar W_{Y_t} \eta_n  + \frac{b_n {\bar W}_{Y_t}}{\theta n} s_n   \right\}   \right] \\
&\leq \mathbb{E}_{n,i}\left[  \sum_{t=1}^{\lfloor s_n \rfloor}  \min\left\{ 1,  \,  1(W_{Y_t} > b_n) + \mathcal{P}_n(Y_t ) +  \bar W_{Y_t} \eta_n  + \frac{b_n {\bar W}_{Y_t}}{\theta n} s_n   \right\}   \right] \\
&\leq \sum_{t=1}^{\lfloor s_n \rfloor} \mathbb{E}_{n,i}\left[   1(W_{Y_t} > b_n) + \mathcal{P}_n(Y_t ) +  \bar W_{Y_t} \eta_n  + \frac{b_n {\bar W}_{Y_t}}{\theta n} s_n   \right] \\
&= \lfloor s_n \rfloor \mathbb{E}_n \left[   1(W_{Y_1} > b_n) + \mathcal{P}_n(Y_1 ) +  \bar W_{Y_1} \eta_n  + \frac{b_n {\bar W}_{Y_1}}{\theta n} s_n   \right] .
\end{align*}

To compute the last expectation, let $(W^{(n)}, W)$ be constructed according to an optimal coupling of $F_n$ and $F$. Let $\bar W^{(n)} = W^{(n)} \wedge b_n$. Then, for any $c_n \geq 1$, 
\begin{align*}
&\lfloor s_n \rfloor \mathbb{E}_n \left[   1(W_{Y_1} > b_n) + \mathcal{P}_n(Y_1 ) +  \bar W_{Y_1} \eta_n  + \frac{b_n {\bar W}_{Y_1}}{\theta n} s_n   \right] \\
&\leq s_n \sum_{j=1}^n \frac{\bar W_j}{\Lambda_n} \left(  1(W_j > b_n) + \mathcal{P}_n(j ) +  \bar W_{j} \eta_n  + \frac{b_n {\bar W}_{j}}{\theta n} s_n \right) \\
&\leq \frac{s_n n}{\Lambda_n} \cdot \frac{1}{n} \sum_{j=1}^n (\bar W_j - c_n)^+ + \frac{s_n n}{\Lambda_n} \cdot \frac{1}{n} \sum_{j=1}^n c_n \left( 1(W_j > b_n) + \mathcal{P}_n(j ) +  \bar W_{j} \eta_n  + \frac{b_n {\bar W}_{j}}{\theta n} s_n \right) \\
&= \frac{s_n}{\mathbb{E}_n[ \bar W^{(n)}] } \left(  \mathbb{E}_n\left[ (\bar W^{(n)} - c_n)^+ \right] + c_n \mathbb{P}_n(W^{(n)} > b_n)  + c_n \mathcal{E}_n +   \mathbb{E}_n\left[ \bar W^{(n)} \right] \left(c_n \eta_n + \frac{c_n b_n s_n}{\theta n} \right) \right) \\
&= O_P\left( s_n \left( g(c_n) + \Delta_n + c_n P(W > b_n) + c_n \mathcal{E}_n + c_n \eta_n + \frac{c_n b_n s_n}{n} \right)  \right),
\end{align*}
as $n \to \infty$, and since $\eta_n = O_P\left( \Delta_n + g(b_n) + b_n^2/n \right)$, we conclude that
\begin{align*}
\mathbb{P}_{n,i}(\tau = r, M_r ) &= O_P\left( s_n \left( g(c_n)   + c_n \mathcal{E}_n +  c_n  \Delta_n + c_n g(b_n) +  c_n b_n^2/n \right) \right),
\end{align*}
as $n \to \infty$. It now follows from the beginning of the proof that
\begin{align*}
\mathbb{P}_n( \tau \leq k ) &\leq O_P\left( k s_n \left( g(c_n)   + c_n \mathcal{E}_n +  c_n  \Delta_n + c_n g(b_n) +  c_n b_n^2/n \right) \right) \\
&\hspace{5mm} + P\left( |\mathcal{V}_{k+1}| > s_n \right) + \mathbb{P}_n\left( \hat T^{(k)} \not\simeq \mathcal{T}^{(k)} \right) ,
\end{align*}
as $n \to \infty$. Since $\lim_{x \to \infty} g(x) = 0$, choosing, for example, $c_n = \left( \mathcal{E}_n + \Delta_n + g(b_n) + b_n^2/n \right)^{-1/2}$ and $s_n = (g(c_n) + c_n^{-1/2})^{-1/2}$ proves the theorem.
\end{proof}

\subsection{Discrete coupling for directed graphs} \label{SS.CouplingDirected}

The equivalent of Theorem~\ref{T.Coupling} ($m=1$) for directed graphs has already been proven, under conditions equivalent to those in Assumption~\ref{A.PrimitivesD}, in \cite{Olvera_20}  (Theorem~6.3) for the DCM, and in \cite{Lee_Olv_20} (Theorem~3.7) for the IRD. Hence, we only need to describe the distribution of the intermediate tree and state the coupling theorem. The descriptions of the couplings follow, with some adjustments, those from Sections~\ref{S.CouplingCM} and \ref{S.CouplingIR}. However, the precise descriptions in the directed case can be found in \cite{Chen_Lit_Olv_17} (Section~5.2) for the DCM and in \cite{Lee_Olv_20} (Section~3.2.2) for the IRD. 

In the directed case, the intermediate tree $\hat T$ is constructed using a sequence of conditionally independent  (given $\mathscr{F}_n$) random vectors $\{ (\hat N_\bi, \hat D_\bi, \mathbf{\hat A}_\bi): \bi \in \mathcal{U}\}$ in $\mathcal{S}$, with $\{ (\hat N_\bi, \hat D_\bi, \mathbf{\hat A}_\bi): \bi \in \mathcal{U}, \bi \neq \emptyset \}$ conditionally i.i.d. The tree $\hat T$ is constructed as in the undirected case using the $\{\hat N_\bi\}$, with all edges pointing towards the root, and the full marks take the form:
$$\mathbf{\hat X}_\bi = (\hat N_\bi, \hat D_\bi,  \mathbf{\hat A}_\bi), \quad \bi \in \mathcal{U}.$$
The marked tree is given by $\hat T(\mathbf{\hat A}) = \{ \mathbf{\hat X}_\bi: \bi \in \hat T \}$.

We now specify the distribution of the full marks, which in the case of a DCM is given by: 
\begin{align*}
\mathbb{P}_n\left( \mathbf{\hat X}_\emptyset \in \cdot \right) &= \frac{1}{n} \sum_{i=1}^n 1((D_i^-, D_i^+, {\bf a}_i) \in \cdot),  \\
\mathbb{P}_n\left( \mathbf{\hat X}_\bi \in \cdot \right) &=  \sum_{i=1}^n \frac{D_i^+}{L_n}1((D_i^-, D_i^+, {\bf a}_i) \in \cdot), \quad \bi\neq \emptyset .
\end{align*}
For the IRD model, first let $\{a_n\}$ and $\{b_n\}$ be a sequences such that $a_n \wedge b_n \xrightarrow{P} \infty$ and $a_n b_n/n \xrightarrow{P} 0$ as $n\to\infty$, and use them to define $\bar W_i^- = W_i^- \wedge a_n$ and $\bar W_i^+ = W_i^+ \wedge b_n$, 
$$\Lambda_n^- = \sum_{i=1}^n \bar W_i^- \qquad \text{and} \qquad \Lambda_n^+ = \sum_{i=1}^n \bar W_i^+.$$
The marks on the coupled marked Galton-Watson process are given by:
\begin{align*}
\mathbb{P}_n\left( \mathbf{\hat X}_\emptyset \in \cdot \right) &= \frac{1}{n} \sum_{i=1}^n P( (D_i^-, D_i^+, {\bf a}_i) \in \cdot | {\bf a}_i),  \\
\mathbb{P}_n\left( \mathbf{\hat X}_\bi \in \cdot \right) &=  \sum_{i=1}^n \frac{\bar W_i^+}{\Lambda_n^+} P( (D_i^-, D_i^++1, {\bf a}_i) \in \cdot | {\bf a}_i ) , \quad \bi \neq \emptyset,
\end{align*}
where conditionally on ${\bf a}_i$, $D_i^-$ and $D_i^+$ are independent Poisson random variables with means $\Lambda_n^+ \bar W_i^+/(\theta n)$ and $\Lambda_n^- \bar W_i^-/(\theta n)$, respectively.

The intermediate coupling theorem for directed graphs is given below, and it is a direct consequence of Theorem~6.3 in \cite{Olvera_20} and Theorem~3.7 in \cite{Lee_Olv_20}. 

\begin{theo} \label{T.CouplingD}
Suppose $G(V_n, E_n)$ is either a DCM or an IRD satisfying Assumption~\ref{A.PrimitivesD}. Then, for $\mathcal{G}^{(k)}_I({\bf a})$ the depth-$k$ neighborhood of a uniformly chosen vertex $I \in V_n$, there exists a marked Galton-Watson tree $\hat T^{(k)} (\mathbf{\hat A})$ restricted to its first $k$ generations, whose root corresponds to vertex $I$, and 
such that for any fixed $k \geq 1$,
$$\mathbb{P}_n\left( \mathcal{G}_I^{(k)}(\mathbf{a}) \not\simeq \hat T^{(k)}(\mathbf{\hat A}) \right) \xrightarrow{P} 0, \qquad n \to \infty.$$
\end{theo}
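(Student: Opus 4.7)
The plan is to reproduce the undirected arguments of Sections~\ref{S.CouplingCM} and \ref{S.CouplingIR} in the directed setting, restricting the breadth-first exploration to inbound edges so that we build the depth-$k$ in-neighborhood of $I$. Since the author indicates that this has already been carried out in \cite{Olvera_20} (Theorem~6.3) for the DCM and in \cite{Lee_Olv_20} (Theorem~3.7) for the IRD, the outline below summarizes the adaptations I would implement rather than reworking every estimate.

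For the DCM, I would label the $L_n$ outbound stubs consecutively so that outbound stubs $\{D_1^+ + \cdots + D_{m-1}^+ + 1, \dots, D_1^+ + \cdots + D_m^+\}$ belong to vertex $m$, and analogously for inbound stubs. The stub-by-stub pairing of Section~\ref{S.CouplingCM} is then carried out with the modification that each inbound stub of a currently active vertex $i \in A_{k-1}$ is paired with a uniformly chosen \emph{outbound} stub from the $L_n$ available, and any newly discovered vertex $j$ contributes $D_j^-$ inbound stubs to the next round. The coupled tree $\hat T$ sets $\hat N_\emptyset = D_I^-$, $\hat D_\emptyset = D_I^+$, and for $\bi \neq \emptyset$ picks the type $j$ of each child independently with probability $D_j^+/L_n$ and sets $\hat N_\bi = D_j^- $ (with $\hat N_\bi = D_j^--1$ only if one of $j$'s own inbound stubs is the one paired, which does not occur on the good coupling event). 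The coupling breaks exactly when a sampled outbound stub is either already paired or belongs to a vertex already placed in $\hat V_{k+1}$, so the union bound of Section~\ref{S.CouplingCM} gives $\mathbb{P}_n(\tau \leq k) \leq 2ka_n^2/L_n + \mathbb{P}_n(|\hat V_{k+1}| > a_n)$, and truncating via Theorem~\ref{T.TreeToTree} together with Assumption~\ref{A.PrimitivesD}(A) concludes.

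For the IRD, I would reuse the Uniform$[0,1]$-based coupling: draw $\{U_{ij}\}$ for each ordered pair, set $X_{ji} = 1(U_{ji} > 1 - p_{ji}^{(n)})$ for the graph edge $j \to i$, and $Z_{ji} = G^{-1}(U_{ji}; q_{ji}^{(n)})$ with $q_{ji}^{(n)} = \bar W_j^+ \bar W_i^-/(\theta n)$ for the Poisson count of type-$j$ children of a type-$i$ in-explored node. The analogue of Lemma~\ref{L.EdgeDiscrepancy} bounds
$$ \mathbb{P}_n\Bigl(\max_{1 \leq j \leq n,\, j \neq i} |X_{ji} - Z_{ji}| \geq 1\Bigr) \leq \min\bigl\{1,\, 1(W_i^- > a_n) + \mathcal{P}_n(i) + \bar W_i^- \eta_n \bigr\}, $$
for a suitable $\eta_n$, via the same inequalities $e^{-x} \geq 1-x$ and $e^x - 1 - x \leq x^2 e^x/2$ applied to the asymmetric kernel. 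Inserting this into the union bound over the up-to-$s_n$ vertices in $\hat V_k$ and following the truncation procedure at the end of the IR proof (with auxiliary sequences $c_n, s_n$) yields the claim.

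The main technical obstacle, in my view, is the asymmetric bookkeeping in the IRD. Unlike the symmetric IR where a single sequence $b_n$ with $b_n/\sqrt{n} \to 0$ controls both endpoints of an edge, the directed setting uses truncations $\bar W_i^- = W_i^- \wedge a_n$ and $\bar W_i^+ = W_i^+ \wedge b_n$ with $a_n b_n/n \to 0$, so the elementary estimates that give the $(q_{ji}^{(n)})^2$ term now couple $a_n$ and $b_n$ through $\Lambda_n^\pm$. One must verify that the auxiliary sequences $c_n, s_n$ can be chosen so that $s_n(g(c_n) + c_n\mathcal{E}_n + c_n\Delta_n + c_n g(a_n \wedge b_n) + c_n a_n b_n/n)$ still tends to zero in probability, using only $E[W^-+W^+] < \infty$. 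This is precisely the place where the proofs in \cite{Olvera_20, Lee_Olv_20} do nontrivial work, and I would follow their parameter choices rather than attempt an independent optimization.
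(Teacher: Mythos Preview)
Your proposal is correct and matches the paper's treatment: the paper does not give an independent proof of Theorem~\ref{T.CouplingD} but simply cites Theorem~6.3 of \cite{Olvera_20} (DCM) and Theorem~3.7 of \cite{Lee_Olv_20} (IRD), whose arguments are exactly the directed adaptations of Sections~\ref{S.CouplingCM}--\ref{S.CouplingIR} that you outline.

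One small clarification on the DCM step: your parenthetical ``$\hat N_\bi = D_j^- - 1$ only if one of $j$'s own inbound stubs is the one paired'' is unnecessary and slightly misstates the mechanism. In the in-component exploration, the edge attaching $j$ to its parent consumes one of $j$'s \emph{outbound} stubs, never an inbound one, so $\hat N_\bi = D_j^-$ unconditionally (compare the mark distribution $\mathbb{P}_n(\mathbf{\hat X}_\bi \in \cdot) = \sum_i (D_i^+/L_n) 1((D_i^-, D_i^+, \mathbf{a}_i) \in \cdot)$ given in Section~\ref{SS.CouplingDirected}). This is precisely why the directed case has no ``$-1$'' on the offspring count, in contrast to the undirected CM. The coupling-breaking events are still the resampling of an already-paired outbound stub or hitting an outbound stub of an already-discovered vertex, and your union bound $\mathbb{P}_n(\tau \leq k) \leq 2k a_n^2/L_n + \mathbb{P}_n(|\hat V_{k+1}| > a_n)$ is the correct conclusion.
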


\subsection{Coupling between two trees}

In view of Theorems~\ref{T.Coupling} and \ref{T.CouplingD}, the proofs of the main theorems, Theorem~\ref{T.MainU} and \ref{T.MainD}, ($m = 1$) will be complete once we establish that with high probability the intermediate tree $\hat T^{(k)}$ is isomorphic to the limiting tree $\mathcal{T}^{(k)}$, and that the node marks in the two trees are within $\epsilon$ distance of each other. 

{\bf Note:} There is no need to consider the undirected and directed cases separately, since they only differ on the sample space for the full marks, $\mathbf{\hat X}_\bi$ / $\boldsymbol{X}_\bi$,  which take values in $\mathcal{S} = \mathbb{N} \times \mathbb{R} \times \mathcal{S}'$ in the undirected case and $\mathcal{S} = \mathbb{N} \times \mathbb{N} \times \mathbb{R} \times \mathbb{R} \times \mathcal{S}'$ in the directed one. For the directed case, all edges in the trees point towards the root. 

The coupling theorem between the two trees is the following. The proof of the main theorems, Theorems~\ref{T.MainU} and \ref{T.MainD}, will follow directly from combining Theorems~\ref{T.Coupling} and \ref{T.TreeToTree} in the undirected case, and Theorems~\ref{T.CouplingD} and \ref{T.TreeToTree} in the directed one. 

\begin{theo} \label{T.TreeToTree}
Under Assumption~\ref{A.PrimitivesU} or \ref{A.PrimitivesD}, as appropriate, there exists a coupling of $\hat T^{(k)}(\mathbf{\hat A})$ and $\mathcal{T}^{(k)}(\boldsymbol{A})$ such that
$$\mathbb{P}_n \left( \hat T^{(k)} \not\simeq \mathcal{T}^{(k)} \right) \xrightarrow{P} 0, \qquad n \to \infty,$$
and such that for any $\epsilon > 0$,
$$\mathbb{E}_n\left[ \rho(\mathbf{\hat X}_\emptyset, \boldsymbol{X}_\emptyset) \right] \xrightarrow{P} 0 \qquad \text{and} \qquad \mathbb{P}_n \left(  \bigcap_{{\bf i} \in  \mathcal{T}^{(k)}} \{ \rho(\mathbf{\hat X}_{\bf i}, \boldsymbol{X}_{\bf i}) \leq \epsilon \} , \, \hat T^{(k)} \simeq \mathcal{T}^{(k)} \right)  \xrightarrow{P} 1, \qquad n \to \infty.$$
\end{theo}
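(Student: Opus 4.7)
The plan is to reduce the tree-to-tree coupling to a collection of marginal Wasserstein comparisons at each node, and then to assemble these into a joint coupling by sampling generation by generation. Two structural observations drive the reduction. First, the number of offspring at any node $\bi$ is encoded by the first (integer-valued) coordinate of $\mathbf{\hat X}_\bi$ and of $\boldsymbol{X}_\bi$; hence, whenever the $\rho$-distance between the two marks is strictly less than $1$, the offspring counts coincide, which in turn makes the two subtrees rooted at $\bi$ structurally aligned. Second, the limiting tree $\mathcal{T}^{(k)}$ has only finitely many nodes almost surely (it is a Galton-Watson tree truncated at fixed depth $k$ with finite-mean offspring under Assumption~\ref{A.PrimitivesU}/\ref{A.PrimitivesD}), so a union bound over the nodes of $\mathcal{T}^{(k)}$ is tractable.

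Next, I would establish the marginal comparisons. For the root, the task is to show
$$W_1\!\left( \text{law}(\mathbf{\hat X}_\emptyset \mid \mathscr{F}_n),\ \text{law}(\boldsymbol{X}_\emptyset) \right) \xrightarrow{P} 0.$$
In the CM case this is essentially the assumed $W_1(\nu_n, \nu) \xrightarrow{P} 0$, since up to the redundant first coordinate both laws project onto $\nu_n$ and $\nu$, respectively. In the IR case, I would start from an optimal $W_1$ coupling of $\nu_n$ and $\nu$, producing close $(\bar W^{(n)}, W)$, and then build the Poisson counts $D^{(n)} \sim \text{Poi}(\Lambda_n \bar W^{(n)}/(\theta n))$ and $D \sim \text{Poi}(W)$ monotonically from a common driver so that $\mathbb{E}_n|D^{(n)} - D|$ is bounded by the expected difference of means. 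The latter is controlled by $W_1(\nu_n,\nu)$ together with $|\Lambda_n/(\theta n) - 1|$ and the truncation tail $E[(W - b_n)^+]$, each of which vanishes by Assumption~\ref{A.PrimitivesU}.B and the slow-growth choice of $b_n$. The non-root comparison is analogous, with both source and target size-biased by the first coordinate; a change-of-measure argument (equivalently, Kantorovich-Rubinstein duality restricted to test functions with linear growth in the size-biasing coordinate) expresses the offspring $W_1$ in terms of the unbiased $W_1$ plus the vanishing normalization errors $|L_n/n - E[\mathscr{D}]|$ (respectively $|\Lambda_n/n - \theta|$), which are controlled by the first-moment hypothesis and $W_1(\nu_n,\nu)\xrightarrow{P}0$.

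With the marginal couplings in hand, I would build the joint coupling recursively: couple the root using the root coupling, and at each subsequent generation, independently at each revealed node, use the offspring coupling to produce the full marks of its children in both trees. Because each marginal coupling satisfies $\mathbb{E}_n[\rho(\mathbf{\hat X}_\bi, \boldsymbol{X}_\bi)] \xrightarrow{P} 0$ and the first coordinate is integer, Markov's inequality forces equal offspring counts with probability tending to $1$ at each node; a union bound over the almost surely finitely many nodes of $\mathcal{T}^{(k)}$ then yields the isomorphism statement and the simultaneous $\epsilon$-closeness of all marks. The root mark conclusion $\mathbb{E}_n[\rho(\mathbf{\hat X}_\emptyset, \boldsymbol{X}_\emptyset)] \xrightarrow{P} 0$ is immediate from the root coupling. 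The main obstacle will be the IR marginal comparison under only the first-moment hypothesis: the Poisson coupling, the truncation at $b_n$, the normalization shift from $W$ to $\Lambda_n \bar W/(\theta n)$, and the size-biasing for offspring must all be absorbed simultaneously, which is where the interplay between the assumed $W_1$ convergence, the choice $b_n \to \infty$ with $b_n/\sqrt{n} \to 0$, and the uniform integrability provided by Assumption~\ref{A.PrimitivesU}.B is essential.
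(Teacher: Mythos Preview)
Your overall architecture---couple the marks marginally, assemble generation by generation, and exploit that $\rho<1$ forces equal offspring counts---matches the paper's. The gap is in the non-root step. You assert that the size-biased marks satisfy $\mathbb{E}_n[\rho(\mathbf{\hat X}_\bi,\boldsymbol{X}_\bi)]\xrightarrow{P}0$, i.e.\ $W_1$ convergence, and then propose a plain union bound over $\mathcal{T}^{(k)}$. Under Assumption~\ref{A.PrimitivesU}/\ref{A.PrimitivesD} only a first moment is assumed on $\mathscr{D}$ (or $W$), so the size-biased first coordinate can have infinite mean; in that regime $W_1$ between the size-biased laws is not even finite, and your Kantorovich--Rubinstein/change-of-measure sketch cannot produce it. Relatedly, $E[|\mathcal{T}^{(k)}|]$ may be infinite, so the union bound you describe is not summable.

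The paper handles exactly this: for the non-root marks it proves only \emph{weak} convergence (via a Skorokhod-type argument on the size-biased attributes, then a monotone Poisson coupling in the IR case), yielding $\mathbb{P}_n(\rho(\mathbf{\hat X}_1,\boldsymbol{X}_1)>\epsilon)\xrightarrow{P}0$ but no $L^1$ control. The assembly step then compensates with a truncation: intersect with $\{|\mathcal{V}_k|\le x_n\}$, bound
\[
\mathbb{P}_n\bigl(\exists\,\bi\in\mathcal{A}_r:\rho(\mathbf{\hat X}_\bi,\boldsymbol{X}_\bi)>\epsilon,\ |\mathcal{V}_k|\le x_n\bigr)\le x_n\,\mathbb{P}_n\bigl(\rho(\mathbf{\hat X}_1,\boldsymbol{X}_1)>\epsilon\bigr),
\]
choose $x_n=\mathbb{P}_n(\rho(\mathbf{\hat X}_1,\boldsymbol{X}_1)>\epsilon)^{-1/2}\to\infty$, and use only that $|\mathcal{V}_k|<\infty$ a.s.\ (not in expectation) to kill $\mathbb{P}_n(|\mathcal{V}_k|>x_n)$. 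Your root argument and the Poisson/mean-shift bookkeeping for the IR are fine; to close the proof you need to downgrade the non-root claim to convergence in probability and insert this truncation device in place of the raw union bound.
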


Before proving Theorem~\ref{T.TreeToTree}, we will need to prove a couple of technical lemmas. The first of the two establishes the existence of couplings for the node attributes, whose distributions are given by:
\begin{align*}
\nu_n(\cdot) &= \mathbb{P}_n\left( \mathbf{\hat A} \in \cdot \right) = \frac{1}{n} \sum_{i=1}^n 1(\mathbf{a}_i \in \cdot ) \qquad \text{and} \qquad \nu(\cdot) = P\left(\boldsymbol{A} \in \cdot \right),
\end{align*}
and their size-biased versions. Recall that in the undirected case the node attributes are of the form $\mathbf{a}_i = (D_i, \mathbf{b}_i)$ in the CM and $\mathbf{a}_i = (W_i, \mathbf{b}_i)$ in the IR, while in the directed case they take the form $\mathbf{a}_i = (D_i^-, D_i^+, \mathbf{b}_i)$ in the DCM and $\mathbf{a}_i = (W_i^-, W_i^+, \mathbf{b}_i)$ in the IRD. In the undirected case, the size-bias is done with respect to the first coordinate, while in the directed case with respect to the second one. Specifically, the size-biased attributes in the undirected case take the form:
\begin{align*}
\mathbb{P}_n\left( \mathbf{\hat A}_b \in \cdot \right) &= \begin{cases}
L_n^{-1} \sum_{i=1}^n D_i 1( (D_i, \mathbf{b}_i) \in \cdot) , & \text{in the CM},\\
\Lambda_n^{-1} \sum_{i=1}^n \bar W_i 1( (W_i, \mathbf{b}_i) \in \cdot), & \text{in the IR},
\end{cases}
\end{align*}
and 
\begin{align*}
P\left( \boldsymbol{A}_b \in \cdot \right) &= \begin{cases}
 E[ \mathscr{D} 1( (D, \boldsymbol{B}) \in \cdot) ] /E[\mathscr{D}], & \text{in the CM}, \\
E[ W 1( (W, \boldsymbol{B}) \in \cdot) ]/ E[W], & \text{in the IR},
\end{cases}
\end{align*}
while in the directed case they take the form:
\begin{align*}
\mathbb{P}_n\left( \mathbf{\hat A}_b \in \cdot \right) &= \begin{cases}
L_n^{-1} \sum_{i=1}^n D_i^+ 1( (D_i^-, D_i^+, \mathbf{b}_i) \in \cdot) , & \text{in the DCM},\\
(\Lambda_n^+)^{-1} \sum_{i=1}^n \bar W_i^+ 1( (W_i^-, W_i^+, \mathbf{b}_i) \in \cdot), & \text{in the IRD},
\end{cases}
\end{align*}
and 
\begin{align*}
P\left( \boldsymbol{A}_b \in \cdot \right) &= \begin{cases}
 E[ \mathscr{D}^+ 1((\mathscr{D}^-, \mathscr{D}^+, \boldsymbol{B}) \in \cdot) ] /E[\mathscr{D}^+], & \text{in the DCM}, \\
E[ W^+ 1( (W^-, W^+, \boldsymbol{B}) \in \cdot) ]/ E[W^+], & \text{in the IRD}.
\end{cases}
\end{align*}

For the undirected case, let $\rho''$ be the metric on $\mathcal{S}'' = [0,\infty) \times \mathcal{S}'$ given by
$$\rho''(\mathbf{x}, \mathbf{y}) = |x_1 - y_1| + \rho'(\mathbf{x}_2, \mathbf{y}_2), \qquad \mathbf{x} = (x_1, \mathbf{x}_2), \, \mathbf{y} = (y_1, \mathbf{y}_2),$$
and for the directed case let $\rho''$ be the metric on $\mathcal{S}'' = [0,\infty) \times [0,\infty) \times \mathcal{S}'$ given by
$$\rho''(\mathbf{x}, \mathbf{y}) = |x_1 - y_1| + |x_1-y_2| +  \rho'(\mathbf{x}_3, \mathbf{y}_3), \qquad \mathbf{x} = (x_1, x_2, \mathbf{x}_3), \, \mathbf{y} = (y_1,y_2,  \mathbf{y}_3).$$

\begin{lemma} \label{L.AttributesConv}
Under Assumption~\ref{A.PrimitivesU} or \ref{A.PrimitivesD}, as appropriate, there exist couplings $(\mathbf{\hat A}, \boldsymbol{A})$ and $(\mathbf{\hat A}_b, \boldsymbol{A}_b)$ constructed on the same probability space $(\mathcal{S}'', \mathscr{F}_n, \mathbb{P}_n)$ such that
$$\mathbb{E}_n\left[ \rho''( \mathbf{\hat A}, \boldsymbol{A}) \right] \xrightarrow{P} 0, \qquad \rho''( \mathbf{\hat A}, \boldsymbol{A})  \xrightarrow{P}  0  \qquad \text{and} \qquad \rho''( \mathbf{\hat A}_b, \boldsymbol{A}_b)  \xrightarrow{P} 0 $$
as $n \to \infty$. 
\end{lemma}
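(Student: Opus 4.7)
The plan for the unbiased pair $(\mathbf{\hat A},\boldsymbol{A})$ is essentially immediate. Since $W_1(\nu_n,\nu)$ is an infimum over couplings on the conditional space $(\mathcal{S}'',\mathscr{F}_n,\mathbb{P}_n)$, for each $n$ I would choose a coupling realizing this infimum up to an additive $1/n$. The assumption $W_1(\nu_n,\nu)\xrightarrow{P} 0$ from Assumption~\ref{A.PrimitivesU} (or \ref{A.PrimitivesD}) then yields $\mathbb{E}_n[\rho''(\mathbf{\hat A},\boldsymbol{A})]\xrightarrow{P} 0$, and a conditional Markov inequality upgrades this to $\rho''(\mathbf{\hat A},\boldsymbol{A})\xrightarrow{P} 0$.

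For the size-biased pair $(\mathbf{\hat A}_b,\boldsymbol{A}_b)$, my strategy is to build it from the unbiased coupling via acceptance-rejection. Fix a truncation level $M=M_n\to\infty$ and take conditionally i.i.d.\ triples $\{(\mathbf{\hat A}^{(j)},\boldsymbol{A}^{(j)},U_j)\}_{j\geq 1}$, where each $(\mathbf{\hat A}^{(j)},\boldsymbol{A}^{(j)})$ is a copy of the unbiased coupling and $U_j\sim\mathrm{Uniform}[0,1]$ is independent. Declare index $j$ accepted on the $\mathbf{\hat A}$-side if $U_j\leq(\hat A_1^{(j)}\wedge M)/M$ and on the $\boldsymbol{A}$-side if $U_j\leq(A_1^{(j)}\wedge M)/M$, where subscript $1$ refers to the first (size-biasing) coordinate of $\rho''$. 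Letting $J_{\mathbf{\hat A}},J_{\boldsymbol{A}}$ be the first acceptance times on each side, the accepted samples carry the $M$-truncated size-biased marginals of $\nu_n,\nu$; using the finite first moments in the assumption together with $\mathbb{E}_n[\hat A_1]\xrightarrow{P} E[A_1]$, I would then couple these truncated samples with true draws from the laws of $\mathbf{\hat A}_b$ and $\boldsymbol{A}_b$ at a total-variation cost that vanishes as $M\to\infty$.

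A standard geometric-sum calculation then gives
$$\mathbb{P}_n(J_{\mathbf{\hat A}}\neq J_{\boldsymbol{A}}) \leq \frac{\mathbb{E}_n[|\hat A_1-A_1|\wedge M]}{\mathbb{E}_n[\hat A_1\wedge M]\vee E[A_1\wedge M]},$$
whose numerator is dominated by $\mathbb{E}_n[\rho''(\mathbf{\hat A},\boldsymbol{A})]\xrightarrow{P} 0$ and whose denominator stays bounded away from zero for large $M$. On the diagonal event $\{J_{\mathbf{\hat A}}=J_{\boldsymbol{A}}\}$, the two accepted samples come from a single pair $(\mathbf{\hat A}^{(j)},\boldsymbol{A}^{(j)})$ conditioned on acceptance (equivalently, the unbiased joint law reweighted by $\min(\hat A_1,A_1)\wedge M\leq M$), so
$$\mathbb{E}_n\!\left[\rho''\,\middle|\, J_{\mathbf{\hat A}}=J_{\boldsymbol{A}}\right] \leq \frac{M\,\mathbb{E}_n[\rho''(\mathbf{\hat A},\boldsymbol{A})]}{\mathbb{E}_n[\min(\hat A_1,A_1)\wedge M]}.$$
Combining these two estimates with the truncation error from the previous step, and invoking conditional Markov, would yield $\rho''(\mathbf{\hat A}_b,\boldsymbol{A}_b)\xrightarrow{P} 0$.

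The main obstacle I anticipate is the two-sided tension in the choice of $M=M_n$: it must grow fast enough that $E[(A_1-M_n)^+]$ and its empirical analogue are negligible (so the TV approximation to the true size-biased laws is tight), yet slowly enough that $M_n\cdot\mathbb{E}_n[\rho''(\mathbf{\hat A},\boldsymbol{A})]\xrightarrow{P} 0$ in order to keep the size-biasing amplification in the diagonal estimate under control. Because $W_1(\nu_n,\nu)$ converges at an unspecified rate, $M_n$ must be calibrated adaptively; a diagonal-sequence extraction (finding deterministic $a_n\to\infty$ with $a_n W_1(\nu_n,\nu)\xrightarrow{P} 0$) gives a suitable choice, after which the remaining book-keeping is routine.
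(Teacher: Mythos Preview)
Your approach is correct and complete in outline, but it differs substantially from the paper's proof. For the unbiased pair you both do the same thing: take a (near-)optimal $W_1$-coupling and apply Markov. For the size-biased pair, however, the paper does \emph{not} build an explicit coupling by acceptance--rejection. Instead it argues directly that $\mathbf{\hat A}_b \Rightarrow \boldsymbol{A}_b$: for any bounded continuous $f$ it writes
\[
\mathbb{E}_n[f(\mathbf{\hat A}_b)] - E[f(\boldsymbol{A}_b)] = \frac{\mathbb{E}_n[\tilde Y f(\mathbf{\hat A})]}{\mathbb{E}_n[\tilde Y]} - \frac{E[Y f(\boldsymbol{A})]}{E[Y]}
\]
(with $\tilde Y$ the relevant, possibly $b_n$-truncated, biasing coordinate), splits the difference into a term controlled by $\mathbb{E}_n[|\tilde Y - Y|]$ and a term $\mathbb{E}_n[Y|f(\mathbf{\hat A})-f(\boldsymbol{A})|]$ handled by dominated convergence. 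Having established weak convergence, it invokes the almost-sure (Skorokhod) representation to produce the coupling with $\rho''(\mathbf{\hat A}_b,\boldsymbol{A}_b)\xrightarrow{P}0$.

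The trade-off is clear. The paper's route is shorter and completely sidesteps the calibration problem you flag: there is no $M_n$ to tune, no diagonal extraction, and no separate bookkeeping for the off-diagonal event, the on-diagonal reweighted expectation, and the truncation TV-error. Your route, on the other hand, is fully constructive---the size-biased coupling is built explicitly from i.i.d.\ copies of the unbiased one---which could be useful if later arguments needed the two couplings to share randomness. One small point to watch in your version: in the IR (and IRD) case the \emph{target} $\mathbf{\hat A}_b$ is already size-biased by the truncated weight $\hat A_1 \wedge b_n$, not by $\hat A_1$ itself, so your ``TV cost that vanishes as $M\to\infty$'' step must compare $(\cdot\wedge M)$-biasing to $(\cdot\wedge b_n)$-biasing on the empirical side; this is harmless since both tails vanish, but it should be said explicitly.
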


\begin{proof}
Assumptions~\ref{A.PrimitivesU} and \ref{A.PrimitivesD} state that $W_1(\nu_n, \nu) \xrightarrow{P} 0$ as $n \to \infty$, and by the properties of the Wasserstein metric (see Theorem~4.1 in \cite{Villani_2009}), there exists an optimal coupling $(\mathbf{\hat A}, \boldsymbol{A})$ such that 
\begin{equation*} 
\mathbb{E}_n\left[ \rho''( \mathbf{\hat A}, \boldsymbol{A}) \right] = W_1(\nu_n, \nu) \xrightarrow{P} 0 \qquad \text{and} \qquad \rho''( \mathbf{\hat A}, \boldsymbol{A})  \xrightarrow{P}  0
\end{equation*}
as $n \to \infty$. 

For the biased versions, note that it suffices to prove the lemma for the undirected case, since a simple rearrangement of terms:
$$\mathbf{a}_i' = (D_i', \mathbf{b}_i') := (D_i^+, D_i^-, \mathbf{b}_i) \qquad \text{or} \qquad \mathbf{a}_i' = (W_i', \mathbf{b}_i') := (W_i^+, W_i^-, \mathbf{b}_i)$$
reduces the directed case to the undirected one. Through the remainder of the proof, write $\mathbf{\hat A} = (\hat Y, \mathbf{\hat B})$ and $\boldsymbol{A} = (Y, \boldsymbol{B})$ to avoid having to separate the CM and IR cases. 

Next, note that we only need to show that that $\mathbf{\hat A}_b \Rightarrow \boldsymbol{A}_b$ as $n \to \infty$, where $\Rightarrow$ denotes convergence in distribution, since then we can take the almost sure representation to obtain that $\rho''(\mathbf{\hat A}_b, \boldsymbol{A}_b) \xrightarrow{P} 0$. To this end, let $f: \mathcal{S}'' \to \mathbb{R}$ be a bounded and continuous function, and let $(\mathbf{\hat A}, \boldsymbol{A})$ be the one from the beginning of the proof. Let $\tilde Y = \hat Y$ if the graph is a CM or $\tilde Y = \hat Y \wedge b_n$ if it is an IR. Then,  
\begin{align*}
\left| \mathbb{E}_n\left[ f(\mathbf{\hat A}_b) \right] - E[f(\boldsymbol{A}_b)] \right| &= \left| \frac{1}{\mathbb{E}_n[ \tilde Y]} \mathbb{E}_n\left[ \tilde Y f( \mathbf{\hat A} ) \right] - \frac{1}{E[Y]} E\left[ Y f(\boldsymbol{A}) \right] \right| \\
&\leq  \frac{1}{\mathbb{E}_n[\tilde Y]} \left( \left| \mathbb{E}_n\left[ (\tilde Y - Y) f(\mathbf{\hat A}) \right] \right| + \left| \mathbb{E}_n \left[ Y (f(\mathbf{\hat A}) - f(\boldsymbol{A})) \right] \right|  \right) \\
&\hspace{5mm} +  \left| \frac{1}{\mathbb{E}_n[\tilde Y]} - \frac{1}{E[Y]} \right| \left| E[Y f(\boldsymbol{A})] \right| \\
&\leq  \frac{1}{\mathbb{E}_n[\tilde Y]} \left(  \mathbb{E}_n\left[ |\tilde Y - Y|  \right] \sup_{\mathbf{a} \in \mathcal{S}''} |f(\mathbf{a})| + \left| \mathbb{E}_n \left[ Y (f(\mathbf{\hat A}) - f(\boldsymbol{A})) \right] \right|  \right) \\
&\hspace{5mm}  +  \frac{\mathbb{E}_n[ |\tilde Y - Y| [}{\mathbb{E}_n[\tilde Y]E[Y]}  \left| E[Y f(\boldsymbol{A})] \right| \\
&\leq  \frac{1}{\mathbb{E}_n[\tilde Y]} \left(  \mathbb{E}_n\left[ |\tilde Y - Y|  \right] 2 \sup_{\mathbf{a} \in \mathcal{S}''} |f(\mathbf{a})| +  \mathbb{E}_n \left[ Y |f(\mathbf{\hat A}) - f(\boldsymbol{A})| \right]  \right).
\end{align*}
Since $W_1(\nu_n, \nu) \xrightarrow{P} 0$ implies that $\mathbb{E}_n[ |\hat Y - Y| ] \xrightarrow{P} 0$ as $n \to \infty$, we have
$$ \mathbb{E}_n\left[ |\tilde Y - Y|  \right] \leq  \mathbb{E}_n\left[ |\hat Y - Y |  \right] +  E\left[ Y 1(Y > b_n) \right] \xrightarrow{P} 0$$
and $\mathbb{E}_n [\tilde Y] \xrightarrow{P} E[Y]$ as $n \to \infty$. And by the dominated convergence theorem,
$$\lim_{n \to \infty} E\left[ \mathbb{E}_n \left[ Y |f(\mathbf{\hat A}) - f(\boldsymbol{A})| \right]  \right] = E\left[ \lim_{n \to \infty} Y |f(\mathbf{\hat A}) - f(\boldsymbol{A})| \right] = 0.$$
Hence, $ \mathbb{E}_n \left[ Y |f(\mathbf{\hat A}) - f(\boldsymbol{A})| \right]  \xrightarrow{P} 0$ as $n \to \infty$, and $\mathbf{A}_b \Rightarrow \boldsymbol{A}_b$ as required. 
\end{proof}

The second technical lemma relates the convergence of the attributes to that of the full marks. 

\begin{lemma} \label{L.WeakConvergence}
Suppose Assumption~\ref{A.PrimitivesU} or \ref{A.PrimitivesD} holds, as appropriate, and let  $(\mathbf{\hat A}, \boldsymbol{A})$ and $(\mathbf{\hat A}_b, \boldsymbol{A}_b)$ be the couplings in Lemma~\ref{L.AttributesConv}. Then, there exist couplings  for $(\mathbf{\hat X}_\emptyset, \boldsymbol{X}_0)$ and $(\mathbf{\hat X}, \boldsymbol{X})$ constructed on the same probability space as $(\mathbf{\hat A}, \boldsymbol{A})$ and $(\mathbf{\hat A}_b, \boldsymbol{A}_b)$, such that 
$$\mathbb{E}_n\left[ \rho( \mathbf{\hat X}_\emptyset, \boldsymbol{X}_0) \right] \xrightarrow{P} 0, \qquad \rho( \mathbf{\hat X}_\emptyset, \boldsymbol{X}_0)  \xrightarrow{P}  0  \qquad \text{and} \qquad \rho( \mathbf{\hat X}, \boldsymbol{X})  \xrightarrow{P} 0 $$
as $n \to \infty$. 
\end{lemma}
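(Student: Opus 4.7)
The plan is to split the argument into two cases. In the CM and DCM the claim is essentially immediate from Lemma~\ref{L.AttributesConv}, because the full mark is a deterministic function of the attribute. In the IR and IRD one must augment the attribute coupling by a Poisson coupling on the number-of-children coordinate.

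First, for the CM, I would observe that $\hat N_\emptyset = D_I$ and $\mathbf{\hat A}_\emptyset = (D_I, \mathbf{b}_I)$, so $\mathbf{\hat X}_\emptyset = (D_I, D_I, \mathbf{b}_I)$, and similarly $\boldsymbol{X}_\emptyset = (\mathscr{D}, \mathscr{D}, \boldsymbol{B})$. Hence $\rho(\mathbf{\hat X}_\emptyset, \boldsymbol{X}_\emptyset) \leq 2 \rho''(\mathbf{\hat A}, \boldsymbol{A})$ pointwise, with the analogous bound $\rho(\mathbf{\hat X}, \boldsymbol{X}) \leq 2 \rho''(\mathbf{\hat A}_b, \boldsymbol{A}_b)$ for the size-biased pair. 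All three conclusions then follow directly from Lemma~\ref{L.AttributesConv}. The DCM is identical after replacing the degree by the pair $(D_I^-, D_I^+)$.

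Next, for the IR, I would condition on the coupling $(\mathbf{\hat A}, \boldsymbol{A}) = ((W_I, \mathbf{b}_I), (W, \boldsymbol{B}))$ provided by Lemma~\ref{L.AttributesConv} and, on the enlarged probability space, build $(\hat N_\emptyset, D)$ using the standard monotone coupling of Poisson random variables --- writing both as independent sums sharing a common Poisson core of mean $\lambda_1 \wedge \lambda_2$ --- so that
$$\mathbb{E}_n\!\left[ |\hat N_\emptyset - D| \,\middle|\, \mathbf{\hat A}, \boldsymbol{A} \right] = \left| \frac{\Lambda_n \bar W_I}{\theta n} - W \right|.$$
Taking expectations and applying the triangle inequality yields
$$\mathbb{E}_n\big[|\hat N_\emptyset - D|\big] \leq \left|\frac{\Lambda_n}{\theta n} - 1\right| \mathbb{E}_n[\bar W_I] + \mathbb{E}_n\big[|\bar W_I - W|\big],$$
and I would argue that both summands vanish in probability: the first because $\Lambda_n/n \xrightarrow{P} \theta$ (combining $n^{-1}\sum_i W_i \xrightarrow{P} \theta$ with $b_n \xrightarrow{P} \infty$ and the integrability $E[W]<\infty$), and the second because it is bounded above by $\mathbb{E}_n[|W_I - W|] + E[(W - b_n)^+]$, both of which tend to zero. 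Since $\rho(\mathbf{\hat X}_\emptyset, \boldsymbol{X}_\emptyset) = |\hat N_\emptyset - D| + \rho''(\mathbf{\hat A}, \boldsymbol{A})$, this delivers the $L^1$ statement for the root, and a fortiori the convergence in probability. The size-biased pair $(\mathbf{\hat X}, \boldsymbol{X})$ is treated by the same Poisson coupling on top of $(\mathbf{\hat A}_b, \boldsymbol{A}_b)$, but because Lemma~\ref{L.AttributesConv} only furnishes the size-biased attribute coupling in probability, I would pass to an almost-sure subsequence to transfer the Poisson coupling error to the desired probability limit. The IRD case is identical after coupling the two conditionally independent Poissons $\hat N_\bi$ and $\hat D_\bi$ against the limiting $\mathcal{N}_\bi$ and $\mathcal{D}_\bi$ separately.

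The main obstacle will be the size-biased IR/IRD step: the truncation level $b_n$ is itself random, the attribute coupling holds only in probability, and the size-biased weight $\bar W_b$ can be atypically large. To push the argument through, one must route it through an almost-sure subsequence and use $E[W]<\infty$ (respectively $E[W^-+W^+]<\infty$) to ensure that the Poisson-mean discrepancy $|\Lambda_n \bar W_b/(\theta n) - W_b|$ remains uniformly controlled along the subsequence and therefore vanishes in probability.
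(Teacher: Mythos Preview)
Your proposal is correct and follows essentially the same route as the paper: the CM/DCM cases are immediate from Lemma~\ref{L.AttributesConv}, and in the IR/IRD cases one builds a monotone Poisson coupling on top of the attribute coupling to control the degree coordinate. The paper realizes the Poisson coupling via the quantile transform $G^{-1}(U;\lambda)$ with a shared uniform rather than a shared Poisson core, but both constructions yield $\mathbb{E}_n[\,|\hat N - N|\mid \hat S,S\,]=|\hat S-S|$, so this is cosmetic. The one point worth streamlining is your size-biased step: the subsequence detour you flag as ``the main obstacle'' is unnecessary, since $|\hat S_b-S_b|\xrightarrow{P}0$ already gives $\mathbb{P}_n(\hat N_b\neq N_b)\leq \mathbb{E}_n[1-e^{-|\hat S_b-S_b|}]\xrightarrow{P}0$ directly (the integrand is bounded by $1$, so its unconditional mean tends to zero, forcing the nonnegative conditional mean to zero in probability); the paper achieves the same end by invoking the a.e.\ continuity of $\lambda\mapsto G^{-1}(u;\lambda)$.
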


\begin{proof}
For the two undirected models, CM and IR, write:
\begin{align*}
\mathbf{\hat A} &= (\hat Y, \mathbf{\hat B}), \hspace{11mm} \boldsymbol{A} = (Y, \boldsymbol{B}) \\
\mathbf{\hat A}_b &= (\hat Y_b, \mathbf{\hat B}_b), \qquad \boldsymbol{A}_b = (Y_b, \boldsymbol{B}_b) .
\end{align*}
For the two directed models, DCM and IRD, write:
\begin{align*}
\mathbf{\hat A} &= (\hat Y^-, \hat Y^+, \mathbf{\hat B}), \hspace{11mm} \boldsymbol{A} = (Y^-, Y^+, \boldsymbol{B}) \\
\mathbf{\hat A}_b &= (\hat Y_b^-, \hat Y_b^+, \mathbf{\hat B}_b), \qquad \boldsymbol{A}_b = (Y_b^-, Y_b^+, \boldsymbol{B}_b) .
\end{align*}

To obtain the statement of the lemma for the CM, simply set $(\mathbf{\hat X}_\emptyset, \boldsymbol{X}_\emptyset) = (\hat Y, \mathbf{\hat A}, Y, \boldsymbol{A})$ and $(\mathbf{\hat X}_1, \boldsymbol{X}_1) = (\hat Y_b, \mathbf{\hat A}_b, Y_b, \boldsymbol{A}_b)$. Similarly, for the DCM set $(\mathbf{\hat X}_\emptyset, \boldsymbol{X}_\emptyset) = (\hat Y^-, \hat Y^+, \mathbf{\hat A}, Y^-, Y^+, \boldsymbol{A})$ and $(\mathbf{\hat X}_1, \boldsymbol{X}_1) = (\hat Y_b^-, \hat Y_b^+, \mathbf{\hat A}_b, Y_b^-, Y_b^+, \boldsymbol{A}_b)$.

For the IR construct 
\begin{align*}
(\hat S, S) &= \left( \Lambda_n (\hat Y \wedge b_n)/(\theta n), Y \right) .
\end{align*}
Note that our assumptions imply that $\mathbb{E}_n \left[ |\hat S - S| \right] \xrightarrow{P} 0$ as $n \to \infty$. Now let $U\sim$ Uniform$[0,1]$ be i.i.d.~and independent of $(\hat S, S)$, and take
$$(\mathbf{\hat X}_\emptyset, \boldsymbol{X}_0 ) = \left( G^{-1}(U; \hat S), \mathbf{\hat A}, G^{-1}(U; Y), \boldsymbol{A} \right),$$
where $G^{-1}(u;\lambda) = \sum_{m=0}^\infty m 1(G(m; \lambda) \leq u < G(m+1;\lambda))$ is the generalized inverse of the Poisson distribution function with mean $\lambda$. Note that since $G(m;\lambda)$ is decreasing in $\lambda$ for all $m \geq 0$, then we have that $\text{Poi}(\lambda) \geq_{\text{s.t.}} \text{Poi}(\mu)$ whenever $\lambda \geq \mu$, where $\geq_{\text{s.t.}}$ denotes the usual stochastic order and $\text{Poi}(\alpha)$ denotes a Poisson random variable with mean $\alpha$. It follows that $E\left[\left| G^{-1}(U;\lambda)- G^{-1}(U;\mu) \right|\right| = |\lambda-\mu|$, which in turn implies that 
$$\mathbb{E}_n\left[ \rho( \mathbf{\hat X}_\emptyset, \boldsymbol{X}_0) \right] = \mathbb{E}_n\left[ |\hat S - S| + \rho''(\mathbf{\hat A}, \boldsymbol{A}) \right] \xrightarrow{P} 0, \qquad  n \to \infty.$$

For the size-biased versions, set
\begin{align*}
( \hat S_b, S_b) &= \left( \Lambda_n (\hat Y_b \wedge b_n)/(\theta n), \, Y_b \right) ,
\end{align*}
note that Lemma~\ref{L.AttributesConv} gives $| \hat S_b - S_b| \xrightarrow{P} 0$ as $n \to \infty$, and let
$$(\mathbf{\hat X}_1, \boldsymbol{X}_1 ) = \left( G^{-1}(U; \hat S_b)+1, \mathbf{\hat A}_b, G^{-1}(U; Y_b)+1, \boldsymbol{A}_b \right).$$
Now use the continuity in $\lambda$ of $G^{-1}(u; \lambda)$ to obtain that 
$$\rho( \mathbf{\hat X}_1, \boldsymbol{X}_1 ) = \left| G^{-1}(U; \hat S_b) - G^{-1}(U; Y_b) \right| + \rho''( \mathbf{\hat A}_b, \boldsymbol{A}_b) \xrightarrow{P} 0, \qquad n \to \infty.$$

The same steps also give the result for the IRD by setting:
\begin{align*}
( \hat S^-, \hat S^+, S^-, S^+) &= \left( \Lambda_n^+ (\hat Y^- \wedge a_n)/(\theta n), \, \Lambda_n^- (\hat Y^+ \wedge b_n)/(\theta n), \, cY^-, (1-c) Y^+ \right), \\
( \hat S_b^-, \hat S_b^+, S_b^-, S_b^+) &= \left( \Lambda_n^+ (\hat Y_b^- \wedge a_n)/(\theta n), \, \Lambda_n^- (\hat Y_b^+ \wedge b_n)/(\theta n), \, cY_b^-, (1-c) Y_b^+ \right),
\end{align*}
where $c = E[W^+]/E[W^- + W^+]$, and setting
\begin{align*}
(\mathbf{\hat X}_\emptyset, \boldsymbol{X}_\emptyset ) &= \left( G^{-1}(U; \hat S^-), \, G^{-1}(U'; \hat S^+)+1, \,  \mathbf{\hat A}, \, G^{-1}(U; S^-), \, G^{-1}(U'; S^+) +1, \, \boldsymbol{A} \right), \\
(\mathbf{\hat X}_1, \boldsymbol{X}_1 ) &= \left( G^{-1}(U; \hat S_b^-), \, G^{-1}(U'; \hat S_b^+)+1, \, \mathbf{\hat A}_b, \, G^{-1}(U'; S_b^-), \, G^{-1}(U; S_b^+) +1, \, \boldsymbol{A}_b \right), 
\end{align*}
for some $U, U'$ i.i.d.~Uniform$[0,1]$ and independent of $\mathscr{F}_n$. This completes the proof. 
\end{proof}

Finally, we can give the proof of Theorem~\ref{T.TreeToTree}.

\begin{proof}[Proof of Theorem~\ref{T.TreeToTree}]
By Lemma~\ref{L.WeakConvergence} there exists couplings $(\mathbf{\hat X}_\emptyset, \boldsymbol{X}_\emptyset)$ and $(\mathbf{\hat X}_1, \boldsymbol{X}_1)$ such that
$$\mathbb{E}_n\left[ \rho( \mathbf{\hat X}_\emptyset, \boldsymbol{X}_\emptyset) \right] \xrightarrow{P} 0 \qquad \text{and} \qquad \rho(\mathbf{\hat X}_1, \boldsymbol{X}_1) \xrightarrow{P} 0,$$
as $n \to \infty$. Now let $\{ (\mathbf{\hat X}_\bi, \boldsymbol{X}_\bi): \bi \in \mathcal{U}, \bi \neq \emptyset\}$ be i.i.d.~copies of $(\mathbf{\hat X}_1, \boldsymbol{X}_1)$, independent of  $(\mathbf{\hat X}_\emptyset, \boldsymbol{X}_\emptyset)$. Recall that $\hat N_\bi$ ($\mathcal{N}_\bi$) can be determined from the first coordinate of $\mathbf{\hat X}_\bi$ ($\boldsymbol{X}_\bi$).

We will now use the sequence $\{ (\mathbf{\hat X}_\bi, \boldsymbol{X}_\bi): \bi \in \mathcal{U}\}$ to construct both $\hat T(\mathbf{\hat A})$ and $\mathcal{T}(\boldsymbol{A})$ by determining their nodes according to the recursions:
$$\hat A_k = \{ (\bi, j): \bi \in \hat A_{k-1}, 1 \leq j \leq \hat N_\bi \} \qquad \text{and} \qquad \mathcal{A}_k = \{ (\bi,j): \bi \in \mathcal{A}_{k-1}, 1 \leq j \leq \mathcal{N}_\bi \},$$
for $k \geq 1$. Without loss of generality assume that $0 < \epsilon < 1$.

Now define the stopping time
$$\kappa(\epsilon) = \inf\left\{ k \geq 0: \rho(\mathbf{\hat X}_\bi, \boldsymbol{X}_\bi) > \epsilon \text{ for some } \bi \in \hat A_k \right\}.$$
Note that since $\hat N_\bi$ and $\mathcal{N}_\bi$ are integer-valued, then $\rho (\mathbf{\hat X}_\bi, \boldsymbol{X}_\bi) < 1$ implies that $\hat N_\bi = \mathcal{N}_\bi$. It follows that for any $x_n \geq 1$, 
\begin{align*}
\mathbb{P}_n \left( \hat T^{(k)} \simeq \mathcal{T}^{(k)} \right) &\geq \mathbb{P}_n \left( \bigcap_{r=0}^k \bigcap_{{\bf i} \in  \mathcal{A}_r} \{ \rho(\mathbf{\hat X}_{\bf i}, \boldsymbol{X}_{\bf i}) \leq \epsilon \} , \, \hat T^{(k)} \simeq \mathcal{T}^{(k)} \right)  \\
&= \mathbb{P}_n\left( \kappa(\epsilon) > k \right) \\
&\geq 1  -  \mathbb{P}_n\left( \kappa(\epsilon) \leq k, |\mathcal{V}_k| \leq x_n \right)  -  \mathbb{P}_n\left( |\mathcal{V}_k| > x_n \right)  \\
&= 1 - \sum_{r=0}^k \mathbb{P}_n\left( \kappa(\epsilon) = r, |\mathcal{V}_k| \leq x_n \right)  -  \mathbb{P}_n\left( |\mathcal{V}_k| > x_n \right),
\end{align*}
where $\mathcal{V}_k = \bigcup_{r=0}^k \mathcal{A}_r$. To compute the last probabilities, note that $\mathbb{P}_n(\kappa(\epsilon) = 0) \leq \epsilon^{-1} \mathbb{E}_n\left[ \rho( \mathbf{\hat X}_\emptyset, \boldsymbol{X}_\emptyset) \right]$, and for $r \geq 1$:
\begin{align*}
\mathbb{P}_n\left( \kappa(\epsilon) = r , |\mathcal{V}_k| \leq x_n \right) &\leq \mathbb{P}_n\left(  \bigcup_{\bi \in \mathcal{A}_{r}} \{  \rho(\mathbf{\hat X}_{\bf i}, \boldsymbol{X}_{\bf i}) > \epsilon\} , |\mathcal{A}_{r} |\leq x_n \right) \\
&\leq \mathbb{E}_n\left[ 1( |\mathcal{A}_r| \leq x_n) \sum_{\bi \in \mathcal{A}_r} 1\left(  \rho(\mathbf{\hat X}_{\bf i}, \boldsymbol{X}_{\bf i}) > \epsilon \right) \right] \\
&=  \mathbb{E}_n\left[ 1( |\mathcal{A}_r| \leq x_n) | \mathcal{A}_r| \right] \mathbb{P}_n \left(  \rho(\mathbf{\hat X}_1, \boldsymbol{X}_1) > \epsilon \right) \\
&\leq x_n  \mathbb{P}_n \left(  \rho(\mathbf{\hat X}_1, \boldsymbol{X}_1) > \epsilon \right),
\end{align*}
where in the third step we used the independence of $(\mathbf{\hat X}_1, \boldsymbol{X}_1)$ from $\mathcal{A}_r$. It follows that if we choose $x_n = \mathbb{P}_n \left(  \rho(\mathbf{\hat X}_1, \boldsymbol{X}_1) > \epsilon \right)^{-1/2} \xrightarrow{P} \infty$, then
\begin{align*}
&\mathbb{P}_n \left( \bigcap_{r=0}^k \bigcap_{{\bf i} \in  \mathcal{A}_r} \{ \rho(\mathbf{\hat X}_{\bf i}, \boldsymbol{X}_{\bf i}) \leq \epsilon \} , \, \hat T^{(k)} \simeq \mathcal{T}^{(k)} \right)  \\
&\geq 1 - \epsilon^{-1}  \mathbb{E}_n\left[ \rho( \mathbf{\hat X}_\emptyset, \boldsymbol{X}_\emptyset) \right] - k x_n \mathbb{P}_n \left(  \rho(\mathbf{\hat X}_1, \boldsymbol{X}_1) > \epsilon \right) -   \mathbb{P}_n\left( |\mathcal{V}_k| > x_n \right) \\
&\geq 1 - \epsilon^{-1}  \mathbb{E}_n\left[ \rho( \mathbf{\hat X}_\emptyset, \boldsymbol{X}_\emptyset) \right] - k x_n^{-1/2} -   \mathbb{P}_n\left( |\mathcal{V}_k| > x_n \right) \xrightarrow{P} 0, 
\end{align*}
as $n \to \infty$. This completes the proof.
\end{proof}

The last proof in the paper relates to the case $m \geq 2$ for both Theorems~\ref{T.MainU} and \ref{T.MainD}. Since the proof for the directed case follows exactly the same steps as for the undirected one, we include here only the undirected case.

\begin{proof}[Proof of Theorem~\ref{T.MainU} $(m \geq 2)$]
Start by sampling $\{I_j: 1 \leq j \leq m\}$ independently and uniformly in $V_n$. Without loss of generality we can assume that $I_1 \neq I_2 \neq \dots \neq I_m$. Next, note that the couplings for $\mathcal{G}_i^{(k)}(\mathbf{a})$ with their corresponding intermediate trees can be done simultaneously for all $i \in V_n$, so let $\hat T^{(k)}_{\emptyset(i)}(\mathbf{\hat A})$ be the coupled tree for $\mathcal{G}_i^{(k)}(\mathbf{a})$. Note that the $\{ \hat T^{(k)}_{\emptyset(I_j)}(\mathbf{\hat A}): 1 \leq j \leq m\}$ are not independent of each other, but by Theorem~\ref{T.MainU} $(m = 1)$ case, they satisfy
$$\sum_{j=1}^m \mathbb{E}_n\left[ \rho(\mathbf{X}_{I_j}, \mathbf{\hat X}_{\emptyset(I_j)}) \right] \xrightarrow{P} 0 \qquad \text{and} \qquad \mathbb{P}_n\left( \bigcup_{j=1}^m \left\{ \mathcal{G}_{I_j}^{(k)} (\mathbf{a}) \not\simeq \hat T^{(k)}_{\emptyset(I_j)} (\mathbf{\hat A})  \right\}  \right) \xrightarrow{P} 0,$$
as $n \to \infty$. We will now explain how to construct a set of i.i.d.~copies of $\hat T_{\emptyset(I_1)}^{(k)} (\mathbf{\hat A})$, denoted $\left\{ \tilde T_{\emptyset(I_j)}^{(k)}(\mathbf{\tilde A}) : 1 \leq j \leq m\right\}$, satisfying
\begin{equation} \label{eq:IndependentCopies}
\mathbb{P}_n\left( \bigcup_{j=1}^m \left\{  \hat T^{(k)}_{\emptyset(I_j)} (\mathbf{\hat A}) \not\simeq \tilde T^{(k)}_{\emptyset(I_j)} (\mathbf{\tilde A})  \right\}  \right) \xrightarrow{P} 0, \qquad n \to \infty.
\end{equation}

To start, let $\tilde T_{\emptyset(I_1)}^{(k)}(\mathbf{\tilde A}) = \hat T_{\emptyset(I_1)}^{(k)}(\mathbf{\hat A})$. Next, note that the trees $\left\{ \hat T_{\emptyset(I_j)}^{(k)}(\mathbf{\hat A}): 1 \leq j \leq m \right\}$ are each (delayed) marked Galton-Watson processes whose roots have distribution $\mu_n^*(\cdot) = \mathbb{P}_n\left( \mathbf{\hat X}_\emptyset \in \cdot \right)$ and all other nodes have distribution $\mu_n(\cdot) = \mathbb{P}_n\left( \mathbf{\hat X}_1 \in \cdot \right)$. Since each full mark $\mathbf{\hat X}_\mathbf{i}$ contains the vector $\mathbf{\hat A}_\mathbf{i}$, we can see which vertex attributes have been sampled. Note that the possible vertex attributes are $\{\mathbf{a}_1, \dots, \mathbf{a}_n\}$, and they are such that $p_i^* = \mathbb{P}_n( \mathbf{\hat A}_\emptyset = \mathbf{a}_i) = 1/n$, and $p_i = \mathbb{P}_n(\mathbf{\hat A}_1 = \mathbf{a}_i)$ is either equal to $D_i/L_n$ in the CM or to $\bar W_i/\Lambda_n$ in the IR. We will keep track of the labels $\{1, 2, \dots, n\}$ of the vertex attributes $\{\mathbf{a}_1, \mathbf{a}_2, \dots, \mathbf{a}_n\}$. To do this, let $S$ be the set of labels sampled in the construction of $\tilde T_{\emptyset(I_1)}^{(k)}(\mathbf{\tilde A})$, and then construct each of the $\tilde T_{\emptyset(I_j)}^{(k)}(\mathbf{\tilde A})$, $2 \leq j \leq m$, in a breadth-first fashion according to the following rule:
\begin{itemize}
\item If a node in $\hat T_{\emptyset(I_j)}^{(k)}(\mathbf{\hat A})$ has a vertex attribute whose label is not in the set $S$, copy the node onto $\tilde T_{\emptyset(I_j)}^{(k)}(\mathbf{\tilde A})$ and add the new observed label to the set $S$.
\item Otherwise, attach an independent copy of a Galton-Watson marked tree having full mark distribution $\mu_n$ where the repeated node would be.
\end{itemize}
Since as long as we do not sample any vertex attributes from the set $S$ we will have that $\tilde T_{\emptyset(I_j)}^{(k)}(\mathbf{\tilde A}) \simeq \hat T_{\emptyset(I_j)}^{(k)}(\mathbf{\hat A})$, then for any $x_n> 0$ and $M_n$ equal to either $L_n$ for a CM or $\Lambda_n$ for an IR, we have that
\begin{align}
&\mathbb{P}_n\left( \bigcup_{j=1}^m \left\{  \hat T^{(k)}_{\emptyset(I_j)} (\mathbf{\hat A}) \not\simeq \tilde T^{(k)}_{\emptyset(I_j)} (\mathbf{\tilde A})  \right\}  \right) \notag \\
&\leq  \mathbb{P}_n\left(  \bigcup_{j=1}^m \left\{  \hat T^{(k)}_{\emptyset(I_j)} (\mathbf{\hat A}) \not\simeq \tilde T^{(k)}_{\emptyset(I_j)} (\mathbf{\tilde A})  \right\} , \, \sum_{j=1}^m \left| \hat T_{\emptyset(I_j)}^{(k)} \right| \leq m x_n, \, \sum_{i\in S} p_i \leq m x_n/M_n \right) \label{eq:BinomialBound} \\
&\hspace{5mm} +  \mathbb{P}_n\left(\left\{ \sum_{i\in S} p_i > m x_n/M_n \right\} \cup \left\{ \sum_{j=1}^m \left| \hat T_{\emptyset(I_j)}^{(k)} \right| > m x_n \right\} \right). \label{eq:MarksNodes}
\end{align}
Now note that since in the first probability $\sum_{j=1}^m\left| \hat T_{\emptyset(I_j)}^{(k)} \right| \leq mx_n$ and the chances of sampling a label from $S$ is at most $m x_n/M_n$, we have that \eqref{eq:BinomialBound} is bounded by 
$$\mathbb{P}_n( \text{Bin}(m x_n, mx_n/M_n) \geq 1) \leq \frac{m^2 x_n^2 }{M_n},$$
where $\text{Bin}(n,p)$ is a binomial random variable with parameters $(n,p)$. To analyze \eqref{eq:MarksNodes} let $Y_\mathbf{i} = p_i M_n$ if $\mathbf{\hat A}_\mathbf{i} = \mathbf{a}_i$, and note that 
$$\sum_{i\in S} p_i = \sum_{j=1}^m \sum_{\mathbf{i} \in \hat T_{\emptyset(I_j)}^{(k)}} Y_\mathbf{i}/M_n,$$
so by the union bound we obtain that \eqref{eq:MarksNodes} is bounded from above by
$$\sum_{j=1}^m \mathbb{P}_n\left( \left| \hat T_{\emptyset(I_j)}^{(k)} \right| \vee \sum_{\mathbf{i} \in \hat T_{\emptyset(I_j)}^{(k)}} Y_\mathbf{i} > x_n \right) = m \mathbb{P}_n\left( \left| \hat T^{(k)} \right| \vee \sum_{\mathbf{i} \in \hat T^{(k)}} Y_\mathbf{i} > x_n \right).$$
Now use Theorem~\ref{T.TreeToTree} to obtain that for any $\epsilon > 0$,
$$\mathbb{P}_n\left( \left| \hat T^{(k)} \right| \vee \sum_{\mathbf{i} \in \hat T^{(k)}} Y_\mathbf{i} > x_n \right) \leq P\left( \left| \mathcal{T}^{(k)} \right| \vee \sum_{\mathbf{i} \in \mathcal{T}^{(k)}} (\mathcal{Y}_\mathbf{i} + \epsilon)  > x_n \right) + o(1)$$
as $n \to \infty$, where $\mathcal{Y}_\mathbf{i}$ is equal to the second component of $\boldsymbol{X}_\mathbf{i}$. Since $|\mathcal{T}^{(k)}| < \infty$ almost surely and does not depend on $\mathscr{F}_n$, and $1/M_n = O(1/n)$ as $n \to \infty$, choosing $x_n = n^{1/2}/\log n$ proves \eqref{eq:IndependentCopies}. 

Finally, use Theorem~\ref{T.TreeToTree} applied to each of the $\{ \tilde T_{\emptyset(I_j)}^{(k)}(\mathbf{\tilde A}): 1 \leq j \leq m\}$ to obtain that there exists an i.i.d.~set $\{ \mathcal{T}^{(k)}_{\emptyset(I_j)}(\boldsymbol{A}): 1 \leq j \leq m \}$ having the same distribution as $\mathcal{T}^{(k)}(\boldsymbol{A})$, such that for any $\epsilon \in (0,1)$, 
$$\sum_{j=1}^m \mathbb{E}_n\left[ \rho(\mathbf{\tilde X}_{\emptyset(I_j)}, \boldsymbol{X}_{\emptyset(I_j)}) \right] \xrightarrow{P} 0 \quad \text{and} \quad  \mathbb{P}_n\left( \bigcap_{j=1}^m \left\{ \bigcap_{\mathbf{i} \in \mathcal{T}_{\emptyset(I_j)}^{(k)}} \{ \rho( \mathbf{\tilde X}_\mathbf{i}, \boldsymbol{X}_\mathbf{i}) \leq \epsilon \} , \, \tilde T_{\emptyset(I_j)}^{(k)} \simeq \mathcal{T}_{\emptyset(I_j)}^{(k)} \right\} \right) \xrightarrow{P} 1$$
as $n \to \infty$. This completes the proof. 
\end{proof}

\section*{Acknowledgement}
I would like to thank Sayan Banerjee for suggesting the connection between strong couplings and the notion of propagation of chaos.

\bibliographystyle{plain}
\bibliography{StrongCoupling}

\end{document}